\crefname{lemma}{lemma}{lemmas}
\Crefname{lemma}{Lemma}{Lemmas}
\crefname{thm}{theorem}{theorems}
\Crefname{thm}{Theorem}{Theorems}
\newtheorem{theorem}{Theorem}
\newtheorem{corollary}{Corollary}[section]
\newtheorem{lemma}{Lemma}[section]
\newtheorem{prop}{Proposition}[section]
\newcommand*{\Exp}{\mathbb{E}}
\newcommand*{\Var}{\textrm{Var}}
\newcommand*{\pall}{p_{\text{all}}}
\newcommand*{\hpall}{\hat{p}_{\text{all}}}
\newcommand*{\muall}{\mu_{\text{all}}}
\newcommand*{\hmuall}{\hat{\mu}_{\text{all}}}
\newcommand{\ceil}[1]{\left\lceil #1 \right\rceil}
\newcommand*{\algname}{\textsc{ABae}\xspace}
\newcommand{\minihead}[1]{{\vspace{.45em}\noindent\textbf{#1.} }}
\begin{document}
\title{Proof: Accelerating Approximate Aggregation Queries with Expensive Predicates}
\author{Daniel Kang$^{*, \dagger}$, John Guibas$^{*, \dagger}$, Peter Bailis$^\dagger$, Tatsunori Hashimoto$^\dagger$, Yi Sun$^\ddagger$, Matei Zaharia$^\dagger$ \\ Stanford University$^\dagger$, University of Chicago$^\ddagger$}
\maketitle
\begin{abstract}
    \noindent Given a dataset $\mathcal{D}$, we are interested in computing the mean of a subset of $\mathcal{D}$ which matches a predicate. \algname leverages stratified sampling and proxy models to efficiently compute this statistic given a sampling budget $N$. In this document, we theoretically analyze \algname and show that the MSE of the estimate decays at rate $O(N_1^{-1} + N_2^{-1} + N_1^{1/2}N_2^{-3/2})$, where $N=K \cdot N_1+N_2$ for some integer constant $K$ and $K \cdot N_1$ and $N_2$ represent the number of samples used in Stage 1 and Stage 2 of \algname respectively.  Hence, if a constant fraction of the total sample budget $N$ is allocated to each stage, we will achieve a mean squared error of $O(N^{-1})$ which matches the rate of mean squared error of the optimal stratified sampling algorithm given a priori knowledge of the predicate positive rate and standard deviation per stratum.
\end{abstract}

\begin{table}[htbp]
\centering
\begin{tabular}{ll}
Symbol & Description \\ \hline
$\mathcal{D}$   & dataset of records \\
$O(x)$   & predicate function \\
$f(x)$   & statistic function \\
$K$   & number of strata \\
$S$   & set of $K$ equal sized stratum formed from $\mathcal{D}$ \\
$X_{k,i}$   & subgaussian r.v. in $S_k$ and the i'th i.i.d predicate-matching sample  \\
$p_k$   & predicate positive rate in stratum $k$ \\
$\mu_k$   & $E[X_k]$  \\
$\sigma^2_k$   & $Var[X_k]$ \\
$C_{k}^{(\mu)}$   & upper bound on $\mu_k$ \\
$C_{k}^{(\sigma)}$   & upper bound on $\sigma_k$ \\
$p_{\text{all}}$   & $\sum_i^K p_k$ \\
$w_k$   & $p_k/\pall$ \\
$\muall$   & $E[X]$, also given by $\sum_{k=1}^K w_k\mu_k$ \\
$N_1$   & sampling budget we allocate per stratum during $1$ \\
$N_2$   & sampling we allocate during $2$ in total\\
$N$   & total sampling budget ($K\cdot N_1+N_2=N)$\\
$T_{k}$   & fraction of our $N_2$ samples we allocate to a stratum during $2$ \\
$B_k^{(1)}$   & number of predicate-matching samples we get in Stage 1 \\
$B_k^{(2)}$   & number of predicate-matching samples we get in Stage 2 \\
$p_*$   & cutoff value used when we split cases where $p_k$ is small and large \\
$F_k$   & we say that $B_k^{(2)} \geq F_k$ with high probability \\
\end{tabular}
\caption{Summary of notation.}\label{nottable}
\label{table:notation}
\end{table}
    
\section{Setup}
Given a dataset $\mathcal{D} = \{ x_1, ..., x_{|D|} \}$, a statistic $f : \mathcal{D} \xrightarrow{} \mathbb{R}$, and a predicate $O : \mathcal{D} \xrightarrow{} \{0, 1\}$, we want to estimate $\Exp_{x \sim \mathcal{D}}[f(x) | \ O(x)=1]=\muall$ given a sampling budget $N$, which limits the number of times the predicate $O(x)$ can be computed. We measure the quality of an estimator for $\muall$ by measuring the mean squared error (MSE) or $\Exp[(\hmuall - \muall)^2]$.

\vspace{3mm}

To estimate this mean efficiently, we use stratified sampling. In stratified sampling, the data is split into $K$ strata or disjoint subsets of the dataset. In other words, given a dataset $\mathcal{D}$, we stratify it into equal-sized strata $S_1, ..., S_K$ such that they are disjoint and span the entirety of $\mathcal{D}$. Given these strata, denote $\mu_k$, $p_k$, $\sigma_k$ as the per-stratum mean of records which match the predicate, the per-stratum predicate positive rate, and the per-stratum standard deviation. Then, $\muall=\sum_{k=1}^K \mu_kp_k  / \sum_{k=1}^K p_k$. 

\vspace{3mm}

We make the following assumptions on our dataset. First, we assume that the
distribution of the data we sample from is subgaussian. This is so that we can
obtain concentration inequalities on estimates of variables such as $\mu_k$ and
$\sigma_k$. We represent the samples from a stratum by a subgaussian random
variable $X_{k, i}$ where $k$ denotes the stratum and $i$ denotes the i'th
independent and identically distributed (i.i.d.) sample. Second, we assume that
$\Exp[|X_{k, i}|] \leq C_k^{(\mu)}$ and $\Var[X_{k, i}] \leq C_k^{(\sigma^2)}$.
Lastly, we assume that $\pall = \sum_{k=1}^K p_k \geq C^{(\pall)}$.

\vspace{5mm}

\begin{algorithm}[htpb]
\caption{Pseudocode for \algname.}
\label{pseudocode}
\begin{algorithmic}[1]
\Function{\algname}{$\mathcal{D}$, $K$, $N_1$, $N_2$}
    \State $S_1, ..., S_K \gets \text{Stratify}(\mathcal{D}, K)$
    \For{each k in  [1, ..., K]}
        \State $R_k^{(1)} \gets \text{Sample}(S_k, N_1)$
        \State $X_k^{(1)} \gets \{f(x) \ | \ x \in R_k^{(1)}, \ O(x) = 1\}$
        \State $B_k^{(1)}$ $\gets$ $|X_k^{(1)}|$
        \State $\hat{\mu}_k$ $\gets$ $\sum_{i=1}^{B_k^{(1)}} X_{k, i}^{(1)} / B_k^{(1)}$ if $B_k^{(1)} > 0$ else $0$
        \State $\hat{p}_k$ $\gets$ $B_k^{(1)} / |R_k^{(1)}|$
        \State $\hat{\sigma}^2_k$ $\gets$ $\sum_{i=1}^{B_k^{(1)}} (X_{k, i}^{(1)} - \hat{\mu}_k)^2 / (B_k^{(1)}-1)$ if $B_k^{(1)} > 1$ else $0$
    \EndFor
    \For{each k in  [1, ..., K]}
        \State $\hat{T}_k$ $\gets$ $\sqrt{\hat{p}_k} \hat{\sigma}_k / \sum_{i=1}^K \sqrt{\hat{p}_i} \hat{\sigma}_i$
        \State $R_k^{(2)} \gets \text{Sample}(S_k, \ceil{N_2 \hat{T}_k})$
        \State $R_k^{(2)} \gets R_k^{(1)} + R_k^{(2)} \ \text{if} \ SampleReuse \ \text{else} \ R_k^{(2)}$
        \State $X_k^{(2)}$ $\gets$ $X_k^{(1)} + \{f(x) \ | \ x \not\in R_k^{(1)}, \ x \in R_k^{(2)}, \ O(x)=1\}$
        \State $B_k^{(2)} \gets |X_k^{(2)}|$
        \State $\hat{p}_k$ $\gets$ $B_k^{(2)} / |R_k^{(2)}| \ \text{if} \ SampleReuse \ \text{else} \ \hat{p}_k$
        \State $\hat{\mu}_k$ $\gets$ $\sum_{i=1}^{B_k^{(2)}} X_i^{(2)} / B_k^{(2)}$ if $B_k^{(2)} > 0$ else $0$
    \EndFor
    \State \Return $\sum_{k=1}^K \hat{p}_k \hat{\mu}_k / \sum_{k=1}^K \hat{p}_k$
\EndFunction
\end{algorithmic}
\end{algorithm}

For the purpose of this proof, $\text{SampleReuse}$ is $False$. However, we show that $\text{SampleReuse}$ set to $True$ does not change the error of \algname asymptotically via \Cref{corrollayx}. Furthermore, note that on line 13 we round up the number of allocated samples $N_2 \hat{T}_k$. This rounding ensures that our bounds hold; however, this means that we may use up to $N_2 + K$ samples. Hence, proof statements involving $N_2$ are only accurate up to $K$.

\section{Algorithm}
Refer to \Cref{pseudocode} for pseudocode of \algname and \Cref{nottable} for a description of notation used throughout the document. \algname creates the strata by splitting the dataset evenly between $K$ strata by ordering the records by proxy score.
 \algname will then perform a two-stage sampling procedure. \algname first
estimates $p_k$ and $\sigma_k$, which are the key quantities for determining
optimal allocation by sampling $N_1$ samples from each of the $K$ strata. Then,
 \algname will allocate the remaining samples proportional to our estimates of
the optimal allocation.  \algname will return the estimate $\sum_k \hat{p}_k
\hat{\mu}/\sum_k \hat{p}_k$ as the approximate answer.

\section{Analysis (Optimal Stratified Sampling Allocation)}
In this section, we assume that the true values of $p_k$ and $\sigma_k$ are known, and derive the optimal allocation according to knowledge of these quantities. We also assume that the sample selection process is deterministic and returns a number of draws equal to the expected number of draws which satisfy the predicate. We also derive an upper bound on the MSE based on this optimal allocation. To avoid fractional samples, we round up samples as appropriate.

\vspace{3mm}

Although the true $p_k$ and $\sigma_k$ are unknown in this setting, our algorithm is based on estimating them in Stage 1 and then allocating samples in Stage 2 based on those estimates. We will then later show in a future section that the estimates made in Stage 1 are tight enough with high probability to achieve matching asymptotic rates of error with respect to the optimal allocation.

\vspace{3mm}
\begin{prop}
Let $T_k$ represent the fraction of our sampling budget we want to allocate to stratum $S_k$. Suppose that $p_k$ and $\sigma_k$ are known, $\Exp[\hat{\mu}_k]=\mu_k$, and that we get $B_k = \ceil{p_kT_kN_2}$ samples for all $k$. Then, the choice of $T_k$ such that $T_k > 0$ and $\sum_{k=1}^K T_k=1$ which minimizes the loss function $\mathcal{L}(T_1, ..., T_K) = \sum_{k=1}^K  \frac{w_k^2\sigma_k^2}{p_kT_kN}$ of the unbiased estimator $\hmuall = \sum_{k=1}^K w_k \hat{\mu}_k$ is:
\begin{align}
    T_k^* = \frac{\sqrt{p_k}\sigma_k}{\sum_{i=1}^K \sqrt{p_i}\sigma_i}.
\end{align}
In addition, the choice $T_k=T_k^*$ will result in the following upper bound on the MSE:
\begin{align}
    E^* = \Exp\Big[(\hmuall - \muall)^2 \mid B_k = \ceil{p_k T_k^* N}\Big] \leq \frac{1}{N\pall^2} \cdot \left(\sum_{k=1}^K \sqrt{p_k}\sigma_k\right)^2.
\end{align}
\end{prop}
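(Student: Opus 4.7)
The plan is to first reduce the MSE to the loss function $\mathcal{L}$, then minimize $\mathcal{L}$ using the Cauchy--Schwarz inequality, which yields both the optimizer $T_k^*$ and the stated upper bound simultaneously.

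First I would compute the MSE of the proposed unbiased estimator $\hmuall = \sum_{k=1}^K w_k \hat{\mu}_k$. Because $\Exp[\hat{\mu}_k]=\mu_k$, the estimator is unbiased and the across-strata samples are independent, so $\Exp[(\hmuall-\muall)^2] = \Var(\hmuall) = \sum_{k=1}^K w_k^2 \Var(\hat{\mu}_k) = \sum_{k=1}^K w_k^2 \sigma_k^2 / B_k$. Since the hypothesis supplies $B_k=\ceil{p_k T_k N}\ge p_k T_k N$, this gives the chain
\begin{equation*}
\Exp[(\hmuall-\muall)^2] \;\le\; \sum_{k=1}^K \frac{w_k^2 \sigma_k^2}{p_k T_k N} \;=\; \mathcal{L}(T_1,\dots,T_K),
\end{equation*}
so it suffices to minimize $\mathcal{L}$ subject to the constraints $T_k>0$ and $\sum_k T_k = 1$.

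Next I would apply Cauchy--Schwarz with the factorization $\sqrt{p_k}\sigma_k = \bigl(\tfrac{\sqrt{p_k}\sigma_k}{\sqrt{T_k}}\bigr)\cdot \sqrt{T_k}$ to obtain
\begin{equation*}
\left(\sum_{k=1}^K \sqrt{p_k}\sigma_k\right)^{\!2} \;\le\; \left(\sum_{k=1}^K \frac{p_k \sigma_k^2}{T_k}\right)\!\left(\sum_{k=1}^K T_k\right) \;=\; \sum_{k=1}^K \frac{p_k \sigma_k^2}{T_k}.
\end{equation*}
Multiplying both sides by $1/(N\pall^2)$ and recognizing that $w_k^2/p_k = p_k/\pall^2$ converts the right-hand side into $\mathcal{L}(T)$, yielding the global lower bound $\mathcal{L}(T) \ge \frac{1}{N\pall^2}\bigl(\sum_k \sqrt{p_k}\sigma_k\bigr)^{2}$. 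Cauchy--Schwarz attains equality exactly when $\sqrt{p_k}\sigma_k/\sqrt{T_k}$ is proportional to $\sqrt{T_k}$, i.e.\ when $T_k \propto \sqrt{p_k}\sigma_k$; combined with $\sum_k T_k = 1$ this forces $T_k = T_k^* = \sqrt{p_k}\sigma_k/\sum_i \sqrt{p_i}\sigma_i$, proving optimality.

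Finally, substituting $T_k = T_k^*$ makes the Cauchy--Schwarz inequality tight, so $\mathcal{L}(T^*) = \frac{1}{N\pall^2}\bigl(\sum_k \sqrt{p_k}\sigma_k\bigr)^{2}$, and combining with the MSE reduction from the first step gives $E^* \le \mathcal{L}(T^*) \le \frac{1}{N\pall^2}\bigl(\sum_k \sqrt{p_k}\sigma_k\bigr)^{2}$. I do not expect a serious obstacle: the only slightly delicate point is the ceiling $\ceil{p_k T_k N}$, but since ceiling only increases the number of samples it weakens the denominator bound in the favorable direction and so is absorbed cleanly into the inequality. (One could instead use a Lagrange-multiplier derivation to identify $T_k^*$ first and then verify the bound by direct substitution, but the Cauchy--Schwarz route handles the optimizer and the upper bound in a single step.)
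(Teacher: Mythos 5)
Your proposal is correct, and the first half (reducing the MSE to $\mathcal{L}$ via unbiasedness, independence across strata, and $B_k = \ceil{p_k T_k N} \ge p_k T_k N$) matches the paper's argument exactly. Where you diverge is the optimization step: the paper uses Lagrange multipliers to find the stationary point of $\mathcal{L}$ under $\sum_k T_k = 1$ and then substitutes $T_k^*$ back into $\mathcal{L}$ to compute the bound, whereas you apply Cauchy--Schwarz with the split $\sqrt{p_k}\sigma_k = \bigl(\sqrt{p_k}\sigma_k/\sqrt{T_k}\bigr)\cdot\sqrt{T_k}$ to get the global lower bound $\mathcal{L}(T) \ge \frac{1}{N\pall^2}\bigl(\sum_k \sqrt{p_k}\sigma_k\bigr)^2$ together with its equality condition $T_k \propto \sqrt{p_k}\sigma_k$. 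Your route is arguably tighter as a piece of mathematics: the Lagrange computation only certifies a critical point (global optimality would still need a convexity remark, which the paper omits), and it requires a separate substitution to evaluate $\mathcal{L}(T^*)$, while Cauchy--Schwarz delivers the minimizer, the proof that it is a global minimum over the feasible set, and the optimal value in one stroke. The only caveat, shared by both arguments, is the degenerate case $\sigma_k = 0$ for some $k$, where the infimum is approached but $T_k^* = 0$ violates the constraint $T_k > 0$; neither proof addresses this and it does not affect the stated bound.
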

\begin{proof}

We can decompose the MSE into bias and variance. Note that the bias is zero due to the strata-specific estimates of the mean being unbiased.

\begin{align}
    \Exp[(\hmuall - \muall)^2 | B_k &= \ceil{p_kT_kN}] = \Exp[\hmuall - \muall]^2 + \Var[\hmuall] = \Var\left[\sum_{k=1}^K w_k\hat{\mu}_k\right]  \leq \mathcal{L}(T_1, ..., T_K) = \sum_{k=1}^K  \frac{w_k^2\sigma_k^2}{p_kT_kN}.
\end{align}

 We will use Lagrange multipliers to minimize the the loss function $\mathcal{L}$ with respect to $T_1$, ..., $T_K$ under the constraint $g$ which enforces that $\sum_{k=1}^K T_k = 1$.
\begin{align}
    \mathcal{L}(T_1, ..., T_K) = \sum_{k=1}^K \frac{w_k^2 \sigma_k^2}{p_k T_k N}, \ \ g(T_1, ..., T_K) = \sum_{k=1}^K T_k - 1.
\end{align}
\begin{align}
    \frac{\partial \mathcal{L}}{\partial T_k} + \lambda \frac{\partial g}{\partial T_k} &= 0\longrightarrow \frac{w_k^2 \sigma_k^2}{p_k T_k^2 N} = \lambda \\
    T_k &= \frac{w_k \sigma_k}{\sqrt{p_k \lambda N}} \\
    \sum_{k=1}^K T_k &= \sum_{k=1}^K  \frac{w_k \sigma_k}{\sqrt{p_k\lambda N}} = 1 \\
    \sum_{k=1}^K  \frac{w_k \sigma_k}{\sqrt{p_k N}} &= \sqrt{\lambda} \\
    \left(\sum_{i=k}^K  \frac{w_k\sigma_k}{\sqrt{p_k N}}\right)^2 &= \lambda.
\end{align}

Thus, the optimal choices of $T_1$, ..., $T_K$ to minimize $\mathcal{L}$ under the constraint $g$ is:
\begin{align}
    T_k^* &= \frac{w_k\sigma_k}{\sqrt{p_k\lambda N}} \\
    &= \frac{w_k \sigma_k}{\sqrt{p_kN} \left( \sum_{i=1}^K \frac{w_i \sigma_i}{\sqrt{p_i N}} \right)}  \\
    &= \frac{\sqrt{p_k}\sigma_k}{\sum_{i=1}^K \sqrt{p_i}\sigma_i}.
\end{align}
We will now derive an upper bound on the MSE given $T_k=T_k^*$. To deal with fractional samples, we round up $p_kT_k^*N$ or the number of draws each strata gets, which means that the bound derived below is still valid.
\begin{align}
    \Exp[(\hmuall - \muall)^2 | B_k = \ceil{p_k T_k^* N}] &\leq \sum_{k=1}^K  \frac{w_k^2 \sigma_k^2}{p_kT_k^*N} \\
    &= \sum_{k=1}^K  \frac{w_k^2 \sigma_k^2}{p_k\frac{\sqrt{p_k}\sigma_k}{\sum_{i=1}^K \sqrt{p_i}\sigma_i}N} \\
    &=  \frac{1}{\pall^2N} \cdot \sum_{k=1}^K \frac{p_k^2\sigma_k^2}{p_k\frac{\sqrt{p_k}\sigma_k}{\sum_{i=1}^K \sqrt{p_i}\sigma_i}} \\
    &=  \frac{1}{\pall^2N} \cdot \sum_{k=1}^K \frac{\sqrt{p_k}\sigma_k}{\frac{1}{\sum_{i=1}^K \sqrt{p_i}\sigma_i}} \\
    &=  \frac{1}{\pall^2 N}  \cdot \left(\sum_{k=1}^K \sqrt{p_k}\sigma_k\right)^2 \qedhere
\end{align}
\end{proof}

\pagebreak

\section{Analysis}
We will prove that with arbitrarily small failure probability $\delta$ over Stage 1, the expectation of the mean squared error of \algname over Stage 2 is
\begin{align}
    \Exp[(\hmuall - \muall)^2] &\leq O\left(N_1^{-1}\right) + O\left(N_2^{-1}\right) + O\left(N_1^{1/2}N_2^{-3/2}\right).
\end{align}

Our proof strategy is outlined below.
\begin{enumerate}
    \item Obtain high probability bounds over all the random variables in Stage 1. Namely, $p_k$, $w_k$, $\sigma_k$, $B_k^{(1)}$, and $T_k$. 
    \item Assuming the worst-case values described by these high probability bounds, we will upper bound the mean squared error of our algorithm in expectation over Stage 2.
\end{enumerate}

\subsection{Big-O Definition}
To clarify our usage of Big-O, we will formally define it as:
\begin{align}
T(...) = O(f(...)) \iff \exists \ C, \ N_0 \ \text{s.t.} \ \forall N \geq N_0 \text{,} \ 0 \leq T(...) \leq C \cdot f(...). 
\end{align}

We use Big-O notation with respect to $N_1$, $N_2$, $p_k$, and $\delta$. Hence, $C$ and $N_0$ must be independent of those quantities.

\subsection{Stage 1}
We will obtain high probability bounds on all the random variables in Stage 1. Namely, $p_k$, $w_k$, $\sigma_k$, $B_k^{(1)}$, and $T_k$. 

\begin{lemma}[$\hat{p}_k$ Upper Bound]\label{pupperbound} For small $\delta > 0$, the following holds for all $k$ simultaneously,
\begin{align}
     \sqrt{\hat{p}_k} \leq \sqrt{p_k + \sqrt{\frac{2\ln(1/\delta)p_k}{N_1}}} = \sqrt{p_k}\sqrt{ 1 + O\left( \sqrt{\frac{\ln(1/\delta)}{p_kN_1}} \right)}.
\end{align}
\end{lemma}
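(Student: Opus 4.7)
The strategy is to apply a Bernstein-type concentration inequality to the empirical mean $\hat p_k$, take a union bound across the $K$ strata, and then take square roots. The key observation is that each of the $N_1$ samples drawn from $S_k$ independently satisfies the predicate with probability $p_k$, so $B_k^{(1)} \sim \text{Bin}(N_1, p_k)$ and $\hat p_k = B_k^{(1)}/N_1$ is the empirical mean of $N_1$ i.i.d.\ Bernoulli$(p_k)$ random variables.

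First I would invoke Bernstein's inequality for the upper tail: for any $t>0$,
\begin{align}
P\bigl(\hat p_k \geq p_k + t\bigr) \;\leq\; \exp\!\left(-\frac{N_1 t^2}{2 p_k(1-p_k) + (2/3) t}\right).
\end{align}
Setting $t = \sqrt{2 p_k \ln(1/\delta)/N_1}$ and using $p_k(1-p_k) \leq p_k$, the dominant term in the denominator is $2p_k$; since we work in the small-$\delta$ regime (so $N_1 p_k \gg \ln(1/\delta)$, i.e., $t \ll p_k$), the $(2/3)t$ correction is of strictly lower order and can be absorbed into the constant of the big-$O$. This yields $P(\hat p_k \geq p_k + \sqrt{2 p_k \ln(1/\delta)/N_1}) \leq \delta$.

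Next I would union-bound across the $K$ strata. Because $K$ is a constant, reparameterizing $\delta \mapsto \delta/K$ only inflates $\ln(1/\delta)$ by an additive constant, and the simultaneous bound $\hat p_k \leq p_k + \sqrt{2 p_k \ln(1/\delta)/N_1}$ holds for all $k$ with probability at least $1-\delta$, up to constants hidden in the big-$O$. Taking square roots of both sides yields the first inequality in the lemma, and then factoring $\sqrt{p_k}$ out of the inner sum rewrites the bound as $\sqrt{p_k}\sqrt{1 + \sqrt{2\ln(1/\delta)/(p_k N_1)}}$, which is exactly the big-$O$ form stated.

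The main obstacle is arriving cleanly at the constant factor $2$ in $\sqrt{2 p_k \ln(1/\delta)/N_1}$ despite the $(2/3)t$ correction in Bernstein's denominator. This is purely an asymptotic bookkeeping issue: for $\delta$ small enough (equivalently, $N_1$ large enough relative to $\ln(1/\delta)/p_k$), the correction term is lower order and can be absorbed, which is precisely the regime under which the big-$O$ statement is intended. A cleaner alternative, if a non-asymptotic inequality with constant $2$ is desired, is to replace Bernstein with the Chernoff bound $P(\hat p_k \geq p_k + t) \leq \exp(-N_1 \, \mathrm{KL}(p_k+t \Vert p_k))$ together with the standard inequality $\mathrm{KL}(p+t \Vert p) \geq t^2/(2p)$; this gives the bound directly without any lower-order correction.
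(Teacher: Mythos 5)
Your main argument follows essentially the same route as the paper: both treat $\hat p_k$ as the mean of $N_1$ i.i.d.\ Bernoulli$(p_k)$ variables, apply a Bernstein-type upper-tail bound whose denominator carries a $+\lambda/3$ correction, choose the deviation $\approx\sqrt{2p_k\ln(1/\delta)/N_1}$, union bound over the $K$ strata, and take square roots. The one substantive difference is how the correction term is handled: the paper sets the tail bound equal to $\delta$, solves the resulting quadratic for $\lambda$, and drops the lower-order $\ln(1/\delta)^2$ term under the radical via $\sqrt{a+b}\le\sqrt{a}+\sqrt{b}$, so it claims the constant $2$ in the intermediate inequality $\hat p_k \le p_k + \sqrt{2\ln(1/\delta)p_k/N_1}$ without invoking an extra regime assumption; you instead absorb the correction asymptotically under $N_1 p_k \gg \ln(1/\delta)$, which is consistent with the big-$O$ form of the conclusion but does not literally establish the exact-constant intermediate inequality as displayed. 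One concrete error to fix: your proposed cleaner alternative rests on $\mathrm{KL}(p+t\,\Vert\, p) \ge t^2/(2p)$, which is false for the upper tail (e.g.\ $p=0.1$, $t=0.1$ gives $\mathrm{KL}\approx 0.044 < 0.05$); the correct denominator for a deviation above $p$ involves $p+t$ rather than $p$, and the $t^2/(2p)$ form is the lower-tail inequality --- which is exactly why the companion lower bound (\Cref{plowerbound}) gets the clean constant with no correction term while the upper bound does not. So the fallback route does not deliver the constant $2$ for free, and the asymptotic absorption in your main route is the step that actually needs to be made precise.
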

\begin{proof}
We apply \cite{chung2002connected} reproduced as \Cref{binomialbs} to upper bound $\hat{p}_k$. We set $Y_i \sim Bernoulli(p_k)$ and $a_i=\frac{1}{N_1}$ for all $i \in [N_1]$. Thus, $\nu=\sum_{i=1}^{N_1} a_i^2p_i = \frac{p_k}{N_1}$ and $Y=\sum_{i=1}^{N_1} a_iY_i=\hat{p}_k$. Then, according to the Lemma,

\begin{align}
    P\left( \hat{p}_k \geq p_k + \lambda \right) \leq exp\left(\frac{-\lambda^2}{2(\frac{p_k}{N_1} + \frac{\lambda}{3N_1})}\right).
\end{align}

We will set $\delta=exp\left(-\lambda^2/(2(\frac{p_k}{N_1} + \frac{\lambda}{3N_1}))\right)$ and $\lambda = \left(\sqrt{18N_1p_k \ln(1/\delta) + \ln(1/\delta)^2} - \ln(1/\delta)\right)/(3N_1)$. Thus, with probability at least $1-\delta$,
\begin{align}
     \hat{p}_k &\leq p_k + \frac{\sqrt{18N_1p_k \ln(1/\delta) + \ln(1/\delta)^2} - \ln(1/\delta)}{3 N_1} \\
     &\leq p_k + \frac{\sqrt{18N_1p_k \ln(1/\delta)}}{3 N_1} = p_k + \sqrt{\frac{2\ln(1/\delta)p_k }{N_1}}.
\end{align}

\vspace{3mm}

With probability at least $1-\delta$,
\begin{align}
    \sqrt{\hat{p}_k} &\leq \sqrt{p_k + \sqrt{\frac{2\ln(1/\delta)p_k}{N_1}}} \leq \sqrt{p_k} \cdot \sqrt{1 + \sqrt{\frac{2\ln(1/\delta)}{p_kN_1}}} = \sqrt{p_k}\sqrt{ 1 + O\left( \sqrt{\frac{\ln(1/\delta)}{p_kN_1}} \right)}.
\end{align}

We union bound across all strata $S_1, ..., S_k$. As a result, the bound above holds for all $k$ simultaneously with probability at least $1-K\delta$.
\end{proof} 

\begin{lemma}[$\hat{p}_k$ Lower Bound]\label{plowerbound} For small $\delta > 0$, the following holds for all $k$ simultaneously,
\begin{align}
     \sqrt{\hat{p}_k} \geq  \sqrt{p_k - \sqrt{\frac{2\ln(1/\delta)p_k}{N_1}}} = \sqrt{p_k} \sqrt{ 1 - O\left( \sqrt{\frac{\ln(1/\delta)}{p_kN_1}}\right)}.
\end{align}
\end{lemma}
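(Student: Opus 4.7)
The plan is to mirror the proof of \Cref{pupperbound} verbatim, just applying the lower-tail version of the same binomial concentration inequality from \cite{chung2002connected} in place of the upper-tail version. Setting $Y_i \sim \text{Bernoulli}(p_k)$ and $a_i = 1/N_1$ as before, so that $\hat{p}_k = \sum_i a_i Y_i$ and $\nu = p_k/N_1$, the symmetric lower-tail Bernstein-type bound gives
\[
P\bigl(\hat{p}_k \leq p_k - \lambda\bigr) \leq \exp\!\left(\frac{-\lambda^2}{2(p_k/N_1 + \lambda/(3N_1))}\right).
\]

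Next, I would set this probability equal to $\delta$ and solve for $\lambda$ with exactly the same quadratic manipulation used in \Cref{pupperbound}, obtaining $\lambda = (\sqrt{18 N_1 p_k \ln(1/\delta) + \ln(1/\delta)^2} - \ln(1/\delta))/(3N_1) \leq \sqrt{2 \ln(1/\delta) p_k / N_1}$ (the final inequality drops the $\ln(1/\delta)^2$ term under the root and the $-\ln(1/\delta)$ outside, both of which only loosen the bound in the correct direction). This yields, with probability at least $1-\delta$, the inequality $\hat{p}_k \geq p_k - \sqrt{2 \ln(1/\delta) p_k / N_1}$. Taking square roots of both sides and factoring out $\sqrt{p_k}$ then produces
\[
\sqrt{\hat{p}_k} \geq \sqrt{p_k - \sqrt{\tfrac{2 \ln(1/\delta) p_k}{N_1}}} = \sqrt{p_k}\sqrt{1 - \sqrt{\tfrac{2 \ln(1/\delta)}{p_k N_1}}} = \sqrt{p_k}\sqrt{1 - O\!\left(\sqrt{\tfrac{\ln(1/\delta)}{p_k N_1}}\right)}.
\]
A final union bound over $k=1,\ldots,K$ gives the "for all $k$ simultaneously" quantifier at the cost of replacing $\delta$ by $K\delta$, which is absorbed into the constant.

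The only real obstacle is sign bookkeeping: the bound is only literally meaningful when $p_k - \sqrt{2 \ln(1/\delta) p_k / N_1} \geq 0$, i.e. when $N_1 \geq 2 \ln(1/\delta)/p_k$. In the asymptotic regime the Big-O statement concerns this is automatic (we are taking $N_1$ large relative to $\ln(1/\delta)$ and $p_k$ held fixed), and outside that regime the conclusion is vacuously consistent with $\sqrt{\hat{p}_k} \geq 0$. Everything else is a direct symmetric copy of the previous lemma's calculation, so there is no genuinely new analytical content — this lemma exists to provide the matching lower direction needed for later control of $\hat{T}_k$ and $\hat{\sigma}_k \sqrt{\hat{p}_k}$.
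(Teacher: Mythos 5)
Your overall strategy (apply the lower-tail concentration inequality from \cite{chung2002connected}, solve for $\lambda$, take square roots, union bound over $k$) is the same as the paper's, and your observation about the bound being vacuous when $p_k - \sqrt{2\ln(1/\delta)p_k/N_1} < 0$ is a fair point the paper glosses over. However, there is a concrete gap in the middle step: you assume a ``symmetric lower-tail Bernstein-type bound'' with denominator $2(p_k/N_1 + \lambda/(3N_1))$, but the cited inequality (\Cref{binomialbs}) gives a \emph{different and stronger} form for the lower tail, namely $P(X < \Exp[X] - \lambda) \leq e^{-\lambda^2/(2\nu)}$ with no linear correction term. The paper uses that form, for which $\lambda = \sqrt{2\ln(1/\delta)p_k/N_1}$ solves $e^{-\lambda^2 N_1/(2p_k)} = \delta$ exactly — no quadratic, no dropped terms. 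With your (weaker) Bernstein form, the deviation $\lambda$ achieving tail probability $\delta$ is the positive root of $\lambda^2 - \tfrac{2\ln(1/\delta)}{3N_1}\lambda - \tfrac{2p_k\ln(1/\delta)}{N_1} = 0$, which is
\[
\lambda^* = \frac{\ln(1/\delta) + \sqrt{\ln(1/\delta)^2 + 18N_1p_k\ln(1/\delta)}}{3N_1} \;>\; \sqrt{\frac{2\ln(1/\delta)p_k}{N_1}},
\]
i.e., the sign in front of $\ln(1/\delta)$ is $+$, not $-$ (the same sign slip appears in the paper's own \Cref{pupperbound}). Consequently your claim that dropping terms ``only loosens the bound in the correct direction'' fails: for a lower-tail statement you may enlarge $\lambda$ only \emph{after} establishing that the tail probability at your chosen $\lambda$ is at most $\delta$, and one can check directly that $\exp\bigl(-M^2/(2(p_k/N_1 + M/(3N_1)))\bigr) = \delta^{1/(1 + M/(3p_k))} > \delta$ at $M = \sqrt{2\ln(1/\delta)p_k/N_1}$. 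So your argument only yields the stated deviation at a confidence level strictly worse than $1-\delta$ (roughly $1-\delta^{3/4}$ in the non-vacuous regime). This does not affect the Big-O form of the conclusion, but the exact inequality in the lemma statement does not follow from your derivation; the fix is simply to use the lower-tail form of \Cref{binomialbs} as the paper does.
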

\begin{proof}
We apply \cite{chung2002connected} reproduced as \Cref{binomialbs} to lower bound $\hat{p}_k$. We set $X_i \sim Bernoulli(p_k)$ and $a_i=\frac{1}{N_1}$ for all $i \in [N_1]$. Thus, $\nu=\sum_{i=1}^{N_1} a_i^2p_i = \frac{p_k}{N_1}$ and $X=\sum_{i=1}^{N_1} a_iX_i=\hat{p}_k$. Then, according to the Lemma:

\begin{align}
    P\left( \hat{p}_k < p_k - \lambda \right) \leq exp\left(\frac{-\lambda^2}{2(\frac{p}{N_1})}\right).
\end{align}

We will set $\delta=exp\left(\frac{-\lambda^2}{2(\frac{p}{N_1})}\right)$ and $\lambda = \frac{\sqrt{p}}{\sqrt{N_1}} \cdot \sqrt{2\ln(1/\delta)}$. Hence, with probability at least  $1-\delta$ (where $\delta$ is probability of failure):
\begin{align}
     \hat{p}_k \geq p_k - \frac{\sqrt{p_k}}{\sqrt{N_1}} \cdot \sqrt{2\ln(1/\delta)} = p_k - \sqrt{\frac{2\ln(1/\delta)p_k}{N_1}}.
\end{align}

In other words, with probability at least $1-\delta$,
\begin{align}
    \sqrt{\hat{p}_k} &\geq  \sqrt{p_k - \sqrt{\frac{2\ln(1/\delta)p_k}{N_1}}} \geq \sqrt{p_k} \sqrt{1 - \sqrt{\frac{2\ln(1/\delta)}{p_kN_1}}} \\
    &= \sqrt{p_k} \sqrt{ 1 - O\left( \sqrt{\frac{\ln(1/\delta)}{p_kN_1}}\right)}.
\end{align}
We then apply the union bound, so the above holds for all $k$ simultaneously with probability at least $1-K\delta$.
\end{proof}

\begin{lemma}[$\hat{w}_k$ Upper and Lower Bounds]\label{wbound}
For small $\delta > 0$, the bounds of \Cref{pupperbound} and \Cref{plowerbound} hold and

\begin{align}
    w_k - O\left( \frac{\ln(1/\delta)\sqrt{p_k}}{\sqrt{N_1}} \right) \leq \frac{p_k - \sqrt{\frac{2\ln(1/\delta)p_k}{N_1}}}{\pall + \sum_{k=1}^K \sqrt{\frac{2\ln(1/\delta)p_k}{N_1}}} \leq \hat{w_k} \leq \frac{p_k + \sqrt{\frac{2\ln(1/\delta)p_k}{N_1}}}{\pall - \sum_{k=1}^K \sqrt{\frac{2\ln(1/\delta)p_k}{N_1}}} \leq w_k + O\left( \frac{\ln(1/\delta)\sqrt{p_k}}{\sqrt{N_1}} \right).
\end{align}
\end{lemma}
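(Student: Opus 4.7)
The plan is to use the pointwise high-probability bounds on $\hat{p}_k$ that appear as intermediate steps in the proofs of \Cref{pupperbound} and \Cref{plowerbound}, namely that with probability at least $1-2K\delta$ after a union bound,
\begin{align*}
p_k - \epsilon_k \leq \hat{p}_k \leq p_k + \epsilon_k
\end{align*}
holds simultaneously for every $k$, where I abbreviate $\epsilon_k := \sqrt{2\ln(1/\delta)\,p_k/N_1}$ and $E := \sum_{j=1}^K \epsilon_j$. Since $\hat{w}_k = \hat{p}_k / \sum_j \hat{p}_j$, the natural approach is to bound the numerator and denominator of $\hat{w}_k$ separately on this event.

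The two inner inequalities are then immediate: for the upper bound I would plug $\hat{p}_k \leq p_k + \epsilon_k$ into the numerator and $\sum_j \hat{p}_j \geq \pall - E$ into the denominator, and the lower side is symmetric. The work is in the two outer inequalities. For the upper side I would combine fractions,
\begin{align*}
\frac{p_k + \epsilon_k}{\pall - E} - \frac{p_k}{\pall} \;=\; \frac{\epsilon_k\,\pall + p_k\,E}{\pall\,(\pall - E)},
\end{align*}
and bound each piece. The assumption $\pall \geq C^{(\pall)}$, combined with $E = O(\sqrt{\ln(1/\delta)/N_1})$ (since $K$ is constant and $p_j \leq 1$), ensures $\pall - E \geq \pall/2$ for all $N_1$ past some $N_0$, so the denominator is $\Theta(1)$. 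For the numerator, $\epsilon_k\,\pall = O(\sqrt{p_k\ln(1/\delta)/N_1})$ directly, and $p_k\,E$ admits the same bound via $p_k \leq \sqrt{p_k}$ (valid since $p_k \leq 1$). Both terms are in $O(\ln(1/\delta)\sqrt{p_k}/\sqrt{N_1})$ because $\sqrt{\ln(1/\delta)} \leq \ln(1/\delta)$ for small $\delta$, which matches the claimed rate. The lower outer bound follows by the same calculation with signs flipped.

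The only real obstacle is a bookkeeping one: keeping track of the $K$-dependence inside $E$ and verifying that the denominator $\pall - E$ stays bounded away from $0$, which is exactly where the standing assumption $\pall \geq C^{(\pall)}$ enters and forces the $N_1 \geq N_0$ qualifier in the Big-O definition. Once that threshold is secured, everything else is elementary algebra together with the union bound over the $2K$ events coming from \Cref{pupperbound} and \Cref{plowerbound}.
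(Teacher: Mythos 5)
Your proposal is correct and follows essentially the same route as the paper: condition on the simultaneous event from \Cref{pupperbound} and \Cref{plowerbound} (probability at least $1-2K\delta$), bound the numerator and denominator of $\hat{w}_k = \hat{p}_k/\hpall$ separately to get the inner inequalities, and then invoke $\pall \geq C^{(\pall)}$ to absorb everything into $O\left(\ln(1/\delta)\sqrt{p_k}/\sqrt{N_1}\right)$. The only difference is cosmetic --- you combine the fractions into a single difference $\frac{\epsilon_k\pall + p_kE}{\pall(\pall-E)}$ whereas the paper expands $\frac{1}{1-x}$ multiplicatively --- and your version is, if anything, slightly cleaner.
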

\begin{proof}
Recall that $\hat{w}_k=\hat{p}_k/\hpall$, where $\hpall=\sum_{k=1}^K \hat{p}_k$. (29) holds on the event that the bounds of \Cref{pupperbound} and \Cref{plowerbound} hold for all $k$ simultaneously, which occurs with probability at least $1-2K\delta$ according to the union bound. For the remainder of the proof of this Lemma, we condition on this event and prove (29). Specifically, the condition is that for each $k$,

\begin{align}
    p_k - \sqrt{\frac{2\ln(1/\delta)p_k}{N_1}} \leq \hat{p}_k \leq p_k + \sqrt{\frac{2\ln(1/\delta)p_k}{N_1}}.
\end{align}

Hence, by summing these inequalities across $k$, we have that with probability at least $1-2K\delta$,

\begin{align}
    \pall - \sum_{k=1}^K \sqrt{\frac{2\ln(1/\delta)p_k}{N_1}}\leq \hpall \leq \pall + \sum_{k=1}^K \sqrt{\frac{2\ln(1/\delta)p_k}{N_1}}.
\end{align}

We will now maximize or minimize the numerator and denominator appropriately in $\hat{w}_k=\hat{p}_k/\hpall$.

\begin{align}
    \frac{p_k - \sqrt{\frac{2\ln(1/\delta)p_k}{N_1}}}{\pall + \sum_{k=1}^K \sqrt{\frac{2\ln(1/\delta)p_k}{N_1}}} \leq \hat{w_k} \leq \frac{p_k + \sqrt{\frac{2\ln(1/\delta)p_k}{N_1}}}{\pall - \sum_{k=1}^K \sqrt{\frac{2\ln(1/\delta)p_k}{N_1}}}.
\end{align}

We will now simplify this bound with Big O. Note that $\pall \geq C^{(\pall)}$.

\begin{align}
    \hat{w_k} &\leq \frac{p_k + \sqrt{\frac{2\ln(1/\delta)p_k}{N_1}}}{\pall - \sum_{k=1}^K \sqrt{\frac{2\ln(1/\delta)p_k}{N_1}}} \\
    &= \frac{\frac{1}{\pall}}{\frac{1}{\pall}} \cdot \frac{p_k}{\pall - \sum_{k=1}^K \sqrt{\frac{2\ln(1/\delta)p_k}{N_1}}} + \frac{\frac{1}{\pall}}{\frac{1}{\pall}} \cdot \frac{ \sqrt{\frac{2\ln(1/\delta)p_k}{N_1}}}{\pall - \sum_{k=1}^K \sqrt{\frac{2\ln(1/\delta)p_k}{N_1}}} \\
    &= \frac{w_k}{1 - \sum_{k=1}^K \sqrt{\frac{2\ln(1/\delta)p_k}{\pall^2N_1}}}  + \frac{ \sqrt{\frac{2\ln(1/\delta)p_k}{\pall^2N_1}}}{1 - \sum_{k=1}^K \sqrt{\frac{2\ln(1/\delta)p_k}{\pall^2N_1}}} \\ 
    &\leq \left(w_k + \sqrt{\frac{2\ln(1/\delta)p_k}{\pall^2N_1}} \right) \cdot \left (1 + \sum_{k=1}^K \sqrt{\frac{2\ln(1/\delta)p_k}{\pall^2N_1}} \right).
\end{align}
\begin{align}
    &\leq \left(w_k + \sqrt{\frac{2\ln(1/\delta)p_k}{\pall^2N_1}} \right) \cdot \left (1 + O\left(\sqrt{\frac{\ln(1/\delta)}{N_1}}\right) \right) \\
    &\leq w_k + \sqrt{\frac{2\ln(1/\delta)p_k}{\pall^2N_1}} + O\left(  \frac{p_k\sqrt{\ln(1/\delta)}}{\sqrt{N_1}} \right) + O\left(\frac{\ln(1/\delta)\sqrt{p_k}}{N_1} \right) \\
    &\leq w_k + O\left( \frac{\sqrt{\ln(1/\delta)p_k}}{\sqrt{N_1}} \right) + O\left(  \frac{p_k\sqrt{\ln(1/\delta)}}{\sqrt{N_1}} \right) + O\left( \frac{\ln(1/\delta)\sqrt{p_k}}{N_1} \right) \\
    &\leq w_k + O\left(\frac{\ln(1/\delta)\sqrt{p_k}}{\sqrt{N_1}} \right).
\end{align}

We repeat these steps for the lower bound. Thus,

\begin{align}
    w_k - O\left( \frac{\ln(1/\delta)\sqrt{p_k}}{\sqrt{N_1}} \right) \leq \hat{w}_k \leq w_k + O\left( \frac{\ln(1/\delta)\sqrt{p_k}}{\sqrt{N_1}} \right).
\end{align}
\end{proof}

\begin{lemma}[$B_k^{(1)}$ Lower Bound]\label{blowerbound} For small $\delta > 0$, the following is true for all $k$,
\begin{align}
    B_k^{(1)} \geq p_kN_1 - \sqrt{2\ln(1/\delta)p_kN_1}.
\end{align}
\end{lemma}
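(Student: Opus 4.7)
The plan is to derive this bound as an immediate corollary of \Cref{plowerbound}. The key observation is that by definition, $\hat{p}_k = B_k^{(1)} / N_1$ (line 8 of the pseudocode, with $|R_k^{(1)}| = N_1$), so $B_k^{(1)} = N_1 \hat{p}_k$. Since $B_k^{(1)}$ is a sum of $N_1$ i.i.d.\ Bernoulli$(p_k)$ indicators of predicate-matching, the Chernoff-style bound already applied in the proof of \Cref{plowerbound} transfers directly.

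Concretely, I would first invoke \Cref{plowerbound}, which gives that with probability at least $1 - \delta$,
\begin{align}
    \hat{p}_k \geq p_k - \sqrt{\frac{2\ln(1/\delta)\, p_k}{N_1}}.
\end{align}
Then I would multiply both sides by $N_1$ to obtain
\begin{align}
    B_k^{(1)} = N_1 \hat{p}_k \geq N_1 p_k - N_1 \sqrt{\frac{2\ln(1/\delta)\, p_k}{N_1}} = p_k N_1 - \sqrt{2\ln(1/\delta)\, p_k N_1},
\end{align}
which is exactly the claimed inequality.

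Finally, as in the earlier lemmas, I would apply a union bound over $k = 1, \ldots, K$ so that the inequality holds for all $k$ simultaneously with probability at least $1 - K\delta$. There is no real obstacle here: this lemma is essentially a rescaling of \Cref{plowerbound}, and all the probabilistic work (invocation of the Chung–Lu binomial tail bound) has already been done. The only minor care needed is to note that the bound depends on $p_k$, so the $C^{(\pall)}$ lower bound on $\pall$ plays no role in this particular statement.
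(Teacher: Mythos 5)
Your proof is correct and is essentially the same as the paper's: the paper applies the multiplicative Chernoff lower-tail bound (\Cref{chernoffbs}) directly to $B_k^{(1)} \sim \mathrm{Binomial}(N_1, p_k)$ and then union bounds over $k$, which yields exactly the inequality you obtain by rescaling \Cref{plowerbound} via $B_k^{(1)} = N_1 \hat{p}_k$. The two derivations rest on the same tail inequality and produce the identical bound, so your shortcut through \Cref{plowerbound} is a valid and slightly more economical presentation.
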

\begin{proof}
We apply \cite{princetonchernoff} reproduced as \Cref{chernoffbs} to lower bound $B_k^{(1)}$. Note that $B_k^{(1)} \sim Binomial(N_1, p_k)$ and that equivalently $B_k^{(1)} = \sum_{i=1}^{N_1} Y_i$ where $Y_i \sim Bernoulli(p_k)$. Hence,

\begin{align}
    P\left( B_k^{(1)} \leq \left(1 - \epsilon\right)\left( p_kN_1 \right) \right) \leq exp\left( \frac{-\epsilon^2p_kN_1}{2}\right).
\end{align}

We set $\delta=exp\left( \frac{-\epsilon^2p_kN_1}{2}\right)$ and $\epsilon=\sqrt{\frac{2\ln(1/\delta)}{p_kN_1}}$. Thus, with probability at least $1-\delta$:

\begin{align}
    B_k^{(1)} \geq p_kN_1 - \sqrt{2\ln(1/\delta)p_kN_1}.
\end{align}

We then union bound across all $k$. As a result, with probability at least $1-K\delta$, (44) holds for all $k$ simultaneously.
\end{proof}

\begin{lemma}[$\sigma_k$ Upper and Lower Bound for $B_k^{(1)} \geq 2$]\label{sigmabound} For small $\delta > 0$, the following holds for all $k$ such that $B_k^{(1)} \geq 2$.
\begin{align}
    \sigma_k \cdot \sqrt{1 - O\left(\sqrt{\frac{\ln(1/\delta)}{p_kN_1}}\right)} &\leq \hat{\sigma_k}   \leq \sigma_k \cdot \sqrt{1 + O\left(\sqrt{\frac{\ln(1/\delta)}{p_kN_1}}\right)}.
\end{align}
\end{lemma}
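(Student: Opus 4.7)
The plan is to obtain a multiplicative concentration bound for the sample variance $\hat{\sigma}_k^2$ around $\sigma_k^2$, and then pass through the square root. Conditional on $B_k^{(1)}$, the quantities $X_{k,1}^{(1)}, \ldots, X_{k,B_k^{(1)}}^{(1)}$ are i.i.d.\ subgaussian with mean $\mu_k$ and variance $\sigma_k^2$, so the centered squares $(X_{k,i}^{(1)} - \mu_k)^2$ are sub-exponential with parameter on the order of $\sigma_k^2$. Applying a Bernstein-type tail inequality for sub-exponential random variables to the average $\tfrac{1}{B_k^{(1)}} \sum_i (X_{k,i}^{(1)} - \mu_k)^2$ yields, with probability at least $1 - \delta$, the concentration
\[
    \left| \frac{1}{B_k^{(1)}}\sum_{i=1}^{B_k^{(1)}} (X_{k,i}^{(1)} - \mu_k)^2 - \sigma_k^2 \right| \leq O\!\left( \sigma_k^2 \sqrt{\frac{\ln(1/\delta)}{B_k^{(1)}}} \right),
\]
under the subgaussian assumption on $X_{k,i}$.

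Next, I would convert this bound on the population-centered sample second moment into one on the sample-centered estimator $\hat{\sigma}_k^2 = \tfrac{1}{B_k^{(1)} - 1}\sum_i (X_{k,i}^{(1)} - \hat{\mu}_k)^2$. The identity
\[
    (B_k^{(1)}-1)\,\hat{\sigma}_k^2 = \sum_{i=1}^{B_k^{(1)}} (X_{k,i}^{(1)} - \mu_k)^2 - B_k^{(1)} (\hat{\mu}_k - \mu_k)^2
\]
reduces the problem to controlling the extra term $(\hat{\mu}_k - \mu_k)^2$. A standard Hoeffding-type bound for subgaussian averages gives $|\hat{\mu}_k - \mu_k| \leq O(\sigma_k \sqrt{\ln(1/\delta)/B_k^{(1)}})$, so $(\hat{\mu}_k - \mu_k)^2$ is of order $\sigma_k^2 \ln(1/\delta)/B_k^{(1)}$, which is subsumed by the error term above. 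The $\tfrac{B_k^{(1)}}{B_k^{(1)} - 1}$ factor is $1 + O(1/B_k^{(1)})$, also absorbed. Hence, with probability at least $1 - O(\delta)$,
\[
    \hat{\sigma}_k^2 = \sigma_k^2 \left( 1 + O\!\left( \sqrt{\frac{\ln(1/\delta)}{B_k^{(1)}}} \right) \right).
\]

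Finally, I would use \Cref{blowerbound} to replace $B_k^{(1)}$ by $p_k N_1$ in the denominator. That lemma gives $B_k^{(1)} \geq p_k N_1 - \sqrt{2\ln(1/\delta) p_k N_1} = p_k N_1 \cdot (1 - o(1))$, so $1/B_k^{(1)} \leq O(1/(p_k N_1))$ on the same event, yielding the rate $\sqrt{\ln(1/\delta)/(p_k N_1)}$ claimed. Taking square roots, since $\sqrt{1 + x} \leq 1 + x$ and $\sqrt{1-x}$ is handled analogously for small $x$, produces
\[
    \sigma_k \sqrt{1 - O\!\left( \sqrt{\tfrac{\ln(1/\delta)}{p_k N_1}} \right)} \leq \hat{\sigma}_k \leq \sigma_k \sqrt{1 + O\!\left( \sqrt{\tfrac{\ln(1/\delta)}{p_k N_1}} \right)}.
\]
A union bound over the $K$ strata and the three high-probability events (sub-exponential concentration, $\hat{\mu}_k$ concentration, and \Cref{blowerbound}) lets the statement hold for all $k$ simultaneously with probability at least $1 - O(K \delta)$.

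The main obstacle I anticipate is justifying the sub-exponential concentration for squared subgaussians cleanly and conditioning properly on the random sample size $B_k^{(1)}$, since the lemma applies per stratum on the event $B_k^{(1)} \geq 2$; threading the randomness of $B_k^{(1)}$ through the Bernstein bound (applied conditionally on $B_k^{(1)}$) and then combining with \Cref{blowerbound} is the most delicate bookkeeping step.
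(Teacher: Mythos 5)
Your proof is essentially correct but takes a genuinely different route from the paper. The paper represents the unbiased sample variance as a U-statistic with kernel $g(X_{k,i},X_{k,j}) = \tfrac{1}{2}(X_{k,i}-X_{k,j})^2$ and applies a bounded-differences concentration inequality for bounded kernels ($g \leq b$, with $b$ coming from the constants $C_k^{(\mu^2)}$), obtaining the additive bound $|\hat{\sigma}_k^2 - \sigma_k^2| \leq \sqrt{8\ln(1/\delta)C_k^{(\mu^4)}/B_k^{(1)}}$ in one shot; it then substitutes the lower bound on $B_k^{(1)}$ from \Cref{blowerbound} and divides through by $\sigma_k^2$ to reach the multiplicative form. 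You instead apply Bernstein-type concentration to the population-centered squares and then transfer to the sample-centered estimator via the identity $(B_k^{(1)}-1)\hat{\sigma}_k^2 = \sum_i (X_{k,i}-\mu_k)^2 - B_k^{(1)}(\hat{\mu}_k - \mu_k)^2$, controlling the correction term separately. Your route works directly from subgaussianity without needing a bounded kernel, at the cost of union-bounding over two concentration events per stratum instead of one; both approaches converge at the same final step (invoke \Cref{blowerbound}, take square roots).

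Two caveats. First, you assert the sub-exponential parameter of $(X_{k,i}-\mu_k)^2$ and the Hoeffding constant for $\hat{\mu}_k$ are of order $\sigma_k^2$ and $\sigma_k$ respectively; in general the subgaussian norm can exceed the standard deviation, so these constants should be tied to the subgaussian parameter (or the paper's $C_k$ bounds), not to $\sigma_k$. This is harmless here because the paper's big-O constants are only required to be uniform in $N_1$, $N_2$, $p_k$, and $\delta$ --- indeed the paper's own last step absorbs a factor of $1/\sigma_k^2$ into the constant --- but you should state it that way. Second, that same division forces the edge case $\sigma_k = 0$, which the paper handles explicitly (then $\hat{\sigma}_k = \sigma_k$ deterministically); your write-up should note it as well.
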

\begin{proof}
To obtain a concentration inequality on $\hat{\sigma}_k$, we will apply bounded differences on the unbiased sample variance estimator for $B_k^{(1)} > 2$, which is:
\begin{align}
    \hat{\sigma}_k^2 = \frac{1}{B_k^{(1)}-1} \sum_{i=1}^{B_k^{(1)}} (X_{k, i} - \hat{\mu}_k)^2.
\end{align}

Note that the unbiased sampled variance is a U-Statistic $U_n$ that arises from taking $g(X_{k, i}, X_{k, j})=\frac{1}{2}(X_{k, i} - X_{k, j})^2$ (\cite{ustatistictovariance} reproduced as \Cref{bdtosv}). For bounded U-Statistics, we can apply bounded differences to obtain a concentration bound (\cite{ustatistic} reproduced as \Cref{bdinq}). Namely, where $g(X_{k, i}, X_{k, j}) \leq b$,
\begin{align}
    P(|U_n - \Exp[U_n]| \geq t) \leq 2\exp(-\frac{nt^2}{8b^2}).
\end{align}
We use this concentration inequality with $n=B_k^{(1)}$ and $g(X_{k, i}, X_{k, j}) \leq b = C_k^{(\mu^2)}$. Hence,
\begin{align}
    P(|\hat{\sigma_k}^2 - \sigma_k^2| \geq t) \leq 2\exp(-\frac{B_k^{(1)}t^2}{8C_k^{(\mu^4)}}).
\end{align}
We then set $2\delta=2\exp(-\frac{B_k^{(1)}t^2}{8C_k^{(\mu^4)}})$ and $t=\sqrt{8\ln(1/\delta)C_k^{(\mu^4)}/B_k^{(1)}}$. Thus, with probability at least $1 - 2\delta$,
\begin{align}
    \sigma_k^2 - \sqrt{\frac{8\ln(1/\delta)C_k^{(\mu^4)}}{B_k^{(1)}}} &\leq \hat{\sigma_k}^2   \leq \sigma_k^2 + \sqrt{\frac{8\ln(1/\delta)C_k^{(\mu^4)}}{B_k^{(1)}}}.
\end{align}
Using \Cref{blowerbound}, we have $B_k^{(1)} \geq p_kN_1 - \sqrt{2\ln(1/\delta)p_kN_1}$ with probability at least $1-K\delta$. Note that if that bound holds, then $p_k \geq p_* = \frac{2\ln(1/\delta)+2\sqrt{\ln(1/\delta)}+2}{N_1}$, which allows us to simplify $(53)$ into $(54)$.
\begin{align}
    \sigma_k^2 - \sqrt{\frac{8\ln(1/\delta)C_k^{(\mu^4)}}{p_kN_1 - \sqrt{2\ln(1/\delta)p_kN_1}}} &\leq \hat{\sigma_k}^2   \leq \sigma_k^2 + \sqrt{\frac{8\ln(1/\delta)C_k^{(\mu^4)}}{p_kN_1 - \sqrt{2\ln(1/\delta)p_kN_1}}} \\
    \sigma_k^2 - \sqrt{\frac{8\ln(1/\delta)C_k^{(\mu^4)}}{p_kN_1 \left(1 - \sqrt{\frac{2\ln(1/\delta)}{p_kN_1}}\right)}}&\leq \hat{\sigma_k}^2   \leq \sigma_k^2 + \sqrt{\frac{8\ln(1/\delta)C_k^{(\mu^4)}}{p_kN_1 \left(1 - \sqrt{\frac{2\ln(1/\delta)}{p_kN_1}}\right)}} \\
    \sigma_k^2 - \sqrt{\frac{8\ln(1/\delta)C_k^{(\mu^4)}}{p_kN_1} \cdot \left(1 + \sqrt{\frac{2\ln(1/\delta)}{p_kN_1}}\right)} &\leq \hat{\sigma_k}^2   \leq \sigma_k^2 + \sqrt{\frac{8\ln(1/\delta)C_k^{(\mu^4)}}{p_kN_1} \cdot \left(1 + \sqrt{\frac{2\ln(1/\delta)}{p_kN_1}}\right)} \\
    \sigma_k^2 - \sqrt{\frac{8\ln(1/\delta)C_k^{(\mu^4)}}{p_kN_1} + \frac{8\sqrt{2}\ln(1/\delta)^{3/2}}{p_k^{3/2}N_1^{3/2}}} &\leq \hat{\sigma_k}^2   \leq \sigma_k^2 + \sqrt{\frac{8\ln(1/\delta)C_k^{(\mu^4)}}{p_kN_1} + \frac{8\sqrt{2}\ln(1/\delta)^{3/2}}{p_k^{3/2}N_1^{3/2}}} \\
    \sigma_k^2 - \sqrt{O\left(\frac{\ln(1/\delta)}{p_kN_1}\right)} &\leq \hat{\sigma_k}^2   \leq \sigma_k^2 + \sqrt{O\left(\frac{\ln(1/\delta)}{p_kN_1}\right)} \\
    \sqrt{\sigma_k^2 - \sqrt{O\left(\frac{\ln(1/\delta)}{p_kN_1}\right)}} &\leq \hat{\sigma_k}   \leq \sqrt{\sigma_k^2 + \sqrt{O\left(\frac{\ln(1/\delta)}{p_kN_1}\right)}} \\
    \sigma_k \cdot \sqrt{1 - O\left(\sqrt{\frac{\ln(1/\delta)}{p_kN_1}}\right)}&\leq \hat{\sigma_k}   \leq \sigma_k \cdot \sqrt{1 + O\left(\sqrt{\frac{\ln(1/\delta)}{p_kN_1}}\right)}.
\end{align}
We union bound across the strata and intersection bound our application of Lemma 4, and get that the above holds with probability at least $(1-2K\delta)(1 - K\delta)=1-3K\delta+2K^2\delta^2 \geq 1 - 3K\delta$. The derivation above relies on $\sigma_k \neq 0$ because we divide by $\sigma_k$ in (55). However, if $\sigma_k = 0$, then $\hat{\sigma}_k = \sigma_k$, so the bound still holds. 
\end{proof}

\begin{lemma}[$\sqrt{p_k}\sigma_k$ Upper Bound for $B_k^{(1)} < 2$]\label{psigmabound}
For $B_k^{(1)} < 2$, the following holds for all $k$:
\begin{align}
    \sqrt{\hat{p}_k}\hat{\sigma}_k &\leq \sqrt{p_k}\sigma_k + O\left(\frac{1}{\sqrt{N_1}}\right).
\end{align}
\end{lemma}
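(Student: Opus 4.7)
The plan is to observe that this lemma is essentially immediate from the algorithm's own definition. On line 9 of \Cref{pseudocode}, $\hat{\sigma}_k$ is explicitly set to $0$ whenever $B_k^{(1)} \leq 1$. Consequently, under the hypothesis $B_k^{(1)} < 2$ we have $\hat{\sigma}_k = 0$ deterministically, so the left-hand side $\sqrt{\hat{p}_k}\hat{\sigma}_k$ vanishes regardless of the value of $\hat{p}_k$. The claimed inequality $0 \leq \sqrt{p_k}\sigma_k + O(1/\sqrt{N_1})$ then follows immediately from the non-negativity of $p_k$ and $\sigma_k$; no concentration argument or union bound is required, and this case does not rely on any high-probability event.

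The reason for retaining the $O(1/\sqrt{N_1})$ slack on the right-hand side---when a bound of $0$ would also be valid---is so that this lemma composes cleanly with \Cref{sigmabound} into a single bound on $\sqrt{\hat{p}_k}\hat{\sigma}_k - \sqrt{p_k}\sigma_k$ that is uniform over $k$, regardless of whether $B_k^{(1)} < 2$ or $B_k^{(1)} \geq 2$. In the latter regime, multiplying the upper bound from \Cref{sigmabound} by the $\sqrt{\hat{p}_k}$ bound from \Cref{pupperbound} and absorbing the resulting $\sigma_k$ factor via the assumption $\sigma_k^2 \leq C_k^{(\sigma^2)}$ yields a bound of the form $\sqrt{\hat{p}_k}\hat{\sigma}_k \leq \sqrt{p_k}\sigma_k + O(\sqrt{\ln(1/\delta)/N_1})$, which matches the asymptotic form chosen here up to the $\ln(1/\delta)$ factor. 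Together the two cases then give a single estimate to feed into the downstream analysis of $\hat{T}_k$.

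The hard part is, honestly, non-existent: the lemma is a one-line consequence of the algorithm's convention that the sample variance is set to zero when fewer than two predicate-matching samples are observed. The only substantive point to verify is that $\hat{\sigma}_k$ is well-defined and equals zero on the event $B_k^{(1)} < 2$, which is immediate from line 9 of the pseudocode, and that the product $\sqrt{\hat{p}_k}\cdot 0$ is $0$ even when $\hat{p}_k = 0$ (so the bound holds vacuously when no Stage 1 samples match as well).
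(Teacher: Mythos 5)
Your proof is correct, but it takes a more direct route than the paper's. The paper does not use the fact that line~9 of \Cref{pseudocode} sets $\hat{\sigma}_k = 0$ when $B_k^{(1)} \leq 1$; instead it bounds the two factors separately: from $B_k^{(1)} < 2$ it deduces $\hat{p}_k = B_k^{(1)}/N_1 \leq 2/N_1$, hence $\sqrt{\hat{p}_k} \leq \sqrt{2/N_1}$, and it bounds $\hat{\sigma}_k \leq C_k^{(\sigma)}$, arriving at $\sqrt{\hat{p}_k}\hat{\sigma}_k \leq \sqrt{p_k}\sigma_k + \sqrt{2/N_1}\,C_k^{(\sigma)} = \sqrt{p_k}\sigma_k + O(1/\sqrt{N_1})$. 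Your observation that the left-hand side is identically zero is simpler and, arguably, more honest: the paper's step $\hat{\sigma}_k \leq C_k^{(\sigma)}$ has no general justification for an \emph{empirical} standard deviation of a subgaussian variable (the assumption only bounds the population variance), and in this regime it is true precisely because $\hat{\sigma}_k = 0$ by the algorithm's convention --- the very fact you invoke directly. What the paper's phrasing buys is a bound of the explicit form $\sqrt{2/N_1}\,C_k^{(\sigma)}$ that would survive if the zero-convention were replaced by any estimator bounded by $C_k^{(\sigma)}$, and whose shape matches the $B_k^{(1)} \geq 2$ case so the two regimes compose uniformly in the denominator bound of \Cref{tlowerbound}; your closing remarks about why the $O(1/\sqrt{N_1})$ slack is retained capture exactly this downstream use. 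Either argument is acceptable.
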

\begin{proof}
In the cases where $B_k^{(1)} < 2$, we will upper bound $\sqrt{\hat{p}_k}\hat{\sigma}_k$ directly instead of combining our concentration inequalities for $\sqrt{\hat{p}_k}$ and $\hat{\sigma}_k$ separately. 
\begin{align}
    \sqrt{\hat{p}_k}\hat{\sigma}_k &\leq  \sqrt{\frac{2}{N_1}}\hat{\sigma}_k \leq \sqrt{p_k}\sigma_k + \sqrt{\frac{2}{N_1}}C_k^{(\sigma)} \leq \sqrt{p_k}\sigma_k + O\left(\frac{1}{\sqrt{N_1}}\right).
\end{align}
\end{proof}

\begin{lemma}[$T_k$ Lower Bound]\label{tlowerbound}
For small $\delta > 0$, the following is true simultaneously for all $k$ where $p_k > p_* = \frac{2\ln(1/\delta)+2\sqrt{\ln(1/\delta)}+2}{N_1}$,
\begin{align}
    \hat{T}_k \geq T_k \left(1 - O\left( \sqrt{\frac{\ln(1/\delta)}{p_kN_1}}\right)\right).
\end{align}
\end{lemma}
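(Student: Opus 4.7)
The goal is to lower bound $\hat{T}_k = \sqrt{\hat{p}_k}\hat{\sigma}_k / \sum_{i=1}^K \sqrt{\hat{p}_i}\hat{\sigma}_i$ by $T_k = \sqrt{p_k}\sigma_k / \sum_{i=1}^K \sqrt{p_i}\sigma_i$ up to a multiplicative factor $1 - O(\sqrt{\ln(1/\delta)/(p_kN_1)})$. The natural plan is to attack the numerator and denominator of $\hat{T}_k$ separately, using the concentration results already established, and then combine them via the elementary inequality $1/(1+x) \geq 1-x$ for small $x$.

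For the numerator, I would first argue that the assumption $p_k > p_*$ together with \Cref{blowerbound} implies $B_k^{(1)} \geq 2$ with probability at least $1-K\delta$; indeed, the threshold $p_*$ was chosen precisely so that $p_k N_1 - \sqrt{2\ln(1/\delta)p_k N_1} \geq 2$. Conditional on this event, \Cref{sigmabound} applies, giving $\hat{\sigma}_k \geq \sigma_k\sqrt{1 - O(\sqrt{\ln(1/\delta)/(p_kN_1)})}$, and \Cref{plowerbound} gives a matching bound for $\sqrt{\hat{p}_k}$. Multiplying the two yields
\begin{align}
\sqrt{\hat{p}_k}\hat{\sigma}_k \geq \sqrt{p_k}\sigma_k \Bigl(1 - O\bigl(\sqrt{\ln(1/\delta)/(p_kN_1)}\bigr)\Bigr).
\end{align}

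For the denominator, I would upper bound each term $\sqrt{\hat{p}_i}\hat{\sigma}_i$ by casework on whether $B_i^{(1)} \geq 2$. When $B_i^{(1)} \geq 2$, I combine \Cref{pupperbound} with \Cref{sigmabound} and convert the resulting multiplicative $(1+O(\sqrt{\ln(1/\delta)/(p_iN_1)}))$ factor into an additive slack of $O(\sqrt{\ln(1/\delta)/N_1})$, absorbing the constant $\sigma_i \leq C_i^{(\sigma)}$. When $B_i^{(1)} < 2$, \Cref{psigmabound} directly yields the same additive $O(1/\sqrt{N_1})$ slack. In either case, summing over $i = 1,\dots,K$ gives
\begin{align}
\sum_{i=1}^K \sqrt{\hat{p}_i}\hat{\sigma}_i \;\leq\; \sum_{i=1}^K \sqrt{p_i}\sigma_i + O\bigl(\sqrt{\ln(1/\delta)/N_1}\bigr),
\end{align}
since $K$ is an absolute constant.

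The final step is to divide the lower bound on the numerator by the upper bound on the denominator. Writing $D = \sum_i \sqrt{p_i}\sigma_i$ (a positive problem-dependent constant) and using $D/(D+\varepsilon) \geq 1 - \varepsilon/D$, the denominator contributes a factor of $1 - O(\sqrt{\ln(1/\delta)/N_1})$, which is subsumed by the numerator's factor $1 - O(\sqrt{\ln(1/\delta)/(p_kN_1)})$ because $p_k \leq 1$. A final union/intersection bound over the events used from \Cref{plowerbound,sigmabound,blowerbound,pupperbound,psigmabound} shows the result holds simultaneously for all $k$ with high probability. The only delicate point is the justification that $p_k > p_*$ indeed forces $B_k^{(1)} \geq 2$ under the event of \Cref{blowerbound}, so that \Cref{sigmabound} can legitimately be invoked for stratum $k$ in the numerator; the denominator requires no such conditioning since the case split already covers both regimes.
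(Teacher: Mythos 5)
Your proposal is correct and follows essentially the same route as the paper's proof: lower bound the numerator via \Cref{plowerbound} and \Cref{sigmabound} (using $p_k > p_*$ and \Cref{blowerbound} to justify $B_k^{(1)} \geq 2$), upper bound the denominator by casework between \Cref{pupperbound}/\Cref{sigmabound} and \Cref{psigmabound}, and combine with $1/(1+x) \geq 1-x$ plus a union bound. Your only deviations are cosmetic: you split the denominator on $B_i^{(1)} \geq 2$ rather than on $p_i$ versus $p_*$, and you keep the final step purely multiplicative (absorbing the denominator's $O(\sqrt{\ln(1/\delta)/N_1})$ into the numerator's $O(\sqrt{\ln(1/\delta)/(p_kN_1)})$ since $p_k \leq 1$), which is if anything slightly cleaner than the paper's additive detour.
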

\begin{proof}
 Our goal is to lower bound $\hat{T}_k= \frac{\sqrt{\hat{p}_k}\hat{\sigma}_k}{\sum_{i=1}^K \sqrt{\hat{p}_i}\hat{\sigma}_i}$. To accomplish this, we will upper bound the denominator and lower bound the numerator. To lower bound the numerator, we will apply our concentration inequalities for $\sqrt{\hat{p}_k}$ and $\hat{\sigma}_k$ from \Cref{plowerbound} and \Cref{sigmabound}. We only lower bound for $k$ where $B_k^{(1)} \geq 2$. Through \Cref{blowerbound}, we know that for $B_k^{(1)} \geq 2$ to be true we can condition on $p_k > p_* =  \frac{2\ln(1/\delta)+2\sqrt{\ln(1/\delta)}+2}{N_1}$. This allows us to satisfy the conditions of \Cref{sigmabound}.
\begin{align}
    \sqrt{\hat{p}}_k\sigma_k &\geq \sqrt{p_k}\sigma_k \sqrt{1-O\left( \sqrt{\frac{\ln(1/\delta)}{p_kN_1}}\right)} \sqrt{1-O\left( \sqrt{\frac{\ln(1/\delta)}{p_kN_1}}\right)} \\
    &\geq  \sqrt{p_k}\sigma_k \left(1-O\left( \sqrt{\frac{\ln(1/\delta)}{p_kN_1}}\right)\right) \geq  \sqrt{p_k}\sigma_k - O\left( \sqrt{\frac{\ln(1/\delta)}{N_1}}\right).
\end{align}
To maximize the denominator, we also apply \Cref{pupperbound} and split the cases where $p_k > p_*$ and where $p_k \leq p_*$.
\begin{align}
    \sum_{i=1}^K \sqrt{\hat{p}_i}\hat{\sigma}_i &\leq \sum_{p_k > p_*}^K \sqrt{p_i}\sigma_i \sqrt{1+O\left( \sqrt{\frac{\ln(1/\delta)}{p_iN_1}}\right)} \sqrt{1+O\left( \sqrt{\frac{\ln(1/\delta)}{p_iN_1}}\right)} + \sum_{p_k > p_*}^K \sqrt{p_i}\sigma_i + O\left(\frac{1}{\sqrt{N_1}}\right) \\
    &\leq \sum_{p_k < p_*}^K \sqrt{p_i}\sigma_i \left(1+O\left( \sqrt{\frac{\ln(1/\delta)}{p_iN_1}}\right)\right) + \sum_{p_k \leq p_*}^K \sqrt{p_i}\sigma_i + O\left(\frac{1}{\sqrt{N_1}}\right)\\
    &\leq \sum_{i=1}^K \sqrt{p_i}\sigma_i + O\left( \sqrt{\frac{\ln(1/\delta)}{N_1}}\right).
\end{align}

Thus, for $p_k > p_* = \frac{2\ln(1/\delta)+2\sqrt{\ln(1/\delta)}+2}{N_1}$,
\begin{align}
    \hat{T}_k= \frac{\sqrt{\hat{p}_k}\hat{\sigma}_k}{\sum_{i=1}^K \sqrt{\hat{p}_i}\hat{\sigma}_i} &\geq \frac{\sqrt{p_k}\sigma_k - O\left( \sqrt{\frac{\ln(1/\delta)}{N_1}}\right)}{\sum_{i=1}^K \sqrt{p_i}\sigma_i + O\left( \sqrt{\frac{\ln(1/\delta)}{N_1}}\right)} = \frac{\frac{1}{\sum_{i=1}^K \sqrt{p_i}\sigma_i}}{\frac{1}{\sum_{i=1}^K \sqrt{p_i}\sigma_i}} \cdot \frac{\sqrt{p_k}\sigma_k - O\left( \sqrt{\frac{\ln(1/\delta)}{N_1}}\right)}{\sum_{i=1}^K \sqrt{p_i}\sigma_i + O\left( \sqrt{\frac{\ln(1/\delta)}{N_1}}\right)} \\
    &\geq \frac{T_k - O\left( \sqrt{\frac{\ln(1/\delta)}{N_1}}\right)}{1 + O\left( \sqrt{\frac{\ln(1/\delta)}{N_1}}\right)} \geq \left(T_k - O\left( \sqrt{\frac{\ln(1/\delta)}{N_1}}\right)\right)\left(1 - O\left( \sqrt{\frac{\ln(1/\delta)}{N_1}}\right)\right) \\
    &\geq T_k - O\left( \sqrt{\frac{\ln(1/\delta)}{N_1}}\right) - O\left(\sqrt{\frac{p_k\ln(1/\delta)}{N_1}}\right) + O\left(\frac{\ln(1/\delta)}{N_1}\right) \\
    &\geq T_k - O\left( \sqrt{\frac{\ln(1/\delta)}{N_1}}\right) \geq T_k \left(1 - O\left( \sqrt{\frac{\ln(1/\delta)}{p_kN_1}}\right)\right) \qedhere
\end{align}
Through the union bound on \Cref{pupperbound}, \Cref{plowerbound}, \Cref{sigmabound}, (73) occurs with probability at least $1-5K\delta$. 
\end{proof}

\subsection{Stage 2}
Recall that we use high probability bounds over Stage 1 and take the expectation over Stage 2. In this section, we derive bounds on quantities in Stage 2 that will help us upper bound the mean squared error of our algorithm. 

\begin{lemma}[$B_k^{(2)}$ Lower Bound]\label{b2lowerbound} If \Cref{tlowerbound} holds, the following is true for all $k$ where $p_k > p_* = \frac{2\ln(1/\delta)+2\sqrt{\ln(1/\delta)}+2}{N_1}$ with probability at least $1-\gamma$,

\begin{align}
    B_k^{(2)} \geq p_kT_kN_2 \left(1 - O\left( \sqrt{\frac{1}{p_kN_1}}\right) - O\left( \sqrt{\frac{\ln(1/\gamma)}{N_2p_k^{3/2}}}\right)\right).
\end{align}
\end{lemma}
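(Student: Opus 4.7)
The plan is to condition on the Stage 1 event under which \Cref{tlowerbound} holds, and then apply a Chernoff-type binomial tail bound to the fresh Stage 2 draws. Fix the Stage 1 outcomes, so that $\hat{T}_k$ is a constant. Stage 2 draws $n := \lceil N_2 \hat{T}_k \rceil$ i.i.d.\ samples from $S_k$ with the predicate matching independently at rate $p_k$, hence $B_k^{(2)} \mid \text{Stage 1} \sim \mathrm{Binomial}(n, p_k)$. Applying the multiplicative Chernoff lower bound (\Cref{chernoffbs}) exactly as in the proof of \Cref{blowerbound}, with probability at least $1-\gamma$ over Stage 2,
\[
B_k^{(2)} \;\geq\; p_k\lceil N_2\hat{T}_k\rceil - \sqrt{2\ln(1/\gamma)\, p_k\lceil N_2\hat{T}_k\rceil} \;\geq\; p_k N_2 \hat{T}_k - \sqrt{2\ln(1/\gamma)\, p_k(N_2\hat{T}_k+1)}.
\]

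Next, plug in bounds on $\hat{T}_k$ from Stage 1. For the leading term $p_k N_2\hat{T}_k$, use the lower bound $\hat{T}_k \geq T_k(1 - O(\sqrt{\ln(1/\delta)/(p_k N_1)}))$ supplied by \Cref{tlowerbound}. For the Chernoff fluctuation term, I need only an \emph{upper} bound on $\hat{T}_k$: by maximizing the numerator and minimizing the denominator of $\hat{T}_k = \sqrt{\hat{p}_k}\hat{\sigma}_k/\sum_i\sqrt{\hat{p}_i}\hat{\sigma}_i$ using \Cref{pupperbound}, \Cref{plowerbound}, and \Cref{sigmabound} in the same spirit as equations (67)--(69), one obtains the matching bound $\hat{T}_k \leq T_k(1 + O(\sqrt{\ln(1/\delta)/(p_k N_1)}))$ on the Stage 1 event, and in particular $N_2\hat{T}_k+1 = O(N_2 T_k)$. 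Substituting yields
\[
B_k^{(2)} \;\geq\; p_k T_k N_2 - O\!\left(p_k T_k N_2 \sqrt{\tfrac{\ln(1/\delta)}{p_k N_1}}\right) - O\!\left(\sqrt{\ln(1/\gamma)\, p_k T_k N_2}\right).
\]

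Finally, factor out $p_k T_k N_2$ to convert each additive slack into the multiplicative form stated in the lemma. The first error term becomes $O(\sqrt{1/(p_k N_1)})$, absorbing the $\ln(1/\delta)$ factor into the hidden constant as done elsewhere in the paper. For the second term, observe that $T_k = \sqrt{p_k}\sigma_k/\sum_i\sqrt{p_i}\sigma_i$ scales as $\sqrt{p_k}$ (since $\sigma_k$ and $\sum_i\sqrt{p_i}\sigma_i$ are $\Theta(1)$ in $p_k, N_1, N_2, \delta, \gamma$), so $p_k T_k N_2 = \Theta(p_k^{3/2} N_2)$ and
\[
\frac{\sqrt{\ln(1/\gamma)\, p_k T_k N_2}}{p_k T_k N_2} \;=\; \sqrt{\frac{\ln(1/\gamma)}{p_k T_k N_2}} \;=\; O\!\left(\sqrt{\frac{\ln(1/\gamma)}{p_k^{3/2} N_2}}\right),
\]
which is exactly the second $O(\cdot)$ appearing in the claim.

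The main obstacle is the bookkeeping that converts the additive Chernoff slack into the multiplicative form in the statement: I must simultaneously wield a \emph{lower} bound on $\hat{T}_k$ in the main term and an \emph{upper} bound on $\hat{T}_k$ in the fluctuation term, and then exploit $T_k \propto \sqrt{p_k}$ to turn the $p_k T_k N_2$ denominator into the $p_k^{3/2} N_2$ that appears in the stated rate.
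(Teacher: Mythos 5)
Your proof is correct and follows essentially the same route as the paper's: condition on Stage 1 so that $B_k^{(2)}$ is binomial in the Stage 2 draws, apply the Chernoff lower tail of \Cref{chernoffbs} with failure probability $\gamma$, substitute the \Cref{tlowerbound} bound on $\hat{T}_k$, and use $T_k = \Theta(\sqrt{p_k})$ to turn $p_k T_k N_2$ into the $p_k^{3/2} N_2$ appearing in the second error term. The only (harmless) difference is that you separately derive an upper bound on $\hat{T}_k$ to control the fluctuation term, whereas the paper substitutes the lower bound on $\hat{T}_k$ into both occurrences at once, implicitly relying on the monotonicity of $x \mapsto x - \sqrt{cx}$.
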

\begin{proof}
Recall that $B_k^{(2)} \sim Binomial(N_2\hat{T}_k, p_k)$. Thus, using \cite{princetonchernoff} reproduced as \Cref{chernoffbs}, we have that:

\begin{align}
    P\left( B_k^{(2)} \leq \left(1 - \epsilon\right)\left( p_k \hat{T}_k N_2  \right) \right) \leq exp\left( \frac{-\epsilon^2 p_k \hat{T}_k N_2  }{2}\right).
\end{align}

We set $\gamma=exp\left( \frac{-\epsilon^2p_k\hat{T}_kN_2}{2}\right)$ and $\epsilon=\sqrt{\frac{2\ln(1/\gamma)}{p_k\hat{T}_kN_2}}$. Thus,we have that with probability $1-\gamma$ and if \Cref{tlowerbound} holds:

\begin{align}
    B_k^{(2)} &\geq p_k\hat{T}_kN_2\left(1 - \sqrt{\frac{2\ln(1/\gamma)}{p_k\hat{T}_kN_2}}\right) \geq p_k\hat{T}_kN_2 - \sqrt{2\ln(1/\gamma)p_k\hat{T}_kN_2} \geq p_k\hat{T}_kN_2\left(1 - \sqrt{\frac{2\ln(1/\gamma)}{p_k\hat{T}_kN_2}}\right).
\end{align}

We lower bound the number of draws $\hat{T}_kN_2$ by conditioning on \Cref{tlowerbound} which says that $\hat{T}_k \geq T_k \left(1 - O\left( \sqrt{\frac{\ln(1/\delta)}{p_kN_1}}\right)\right)$.  Thus, with probability at least $1-\gamma$,

\begin{align}
    &\geq p_kT_kN_2\left(1 - O\left( \sqrt{\frac{\ln(1/\delta)}{p_kN_1}}\right)\right)\left(1 - \sqrt{\frac{2\ln(1/\gamma)}{p_kT_kN_2\left(1 - O\left( \sqrt{\frac{\ln(1/\delta)}{p_kN_1}}\right)\right)}}\right) \\
    &\geq p_kT_kN_2 \left(1 - O\left( \sqrt{\frac{\ln(1/\delta)}{p_kN_1}}\right) - \sqrt{\frac{2\ln(1/\gamma)}{p_kT_kN_2\left(1 - O\left(\frac{\ln(1/\delta)}{\sqrt{p_kN_1}}\right)\right)}}\right) \\ 
    &\geq p_kT_kN_2 \left(1 - O\left( \sqrt{\frac{\ln(1/\delta)}{p_kN_1}}\right) - \sqrt{\frac{2\ln(1/\gamma)}{p_kT_kN_2} \cdot \left(1 + O\left(\frac{\ln(1/\delta)}{\sqrt{p_kN_1}}\right)\right)}  \right) \\ 
    &\geq p_kT_kN_2 \left(1 - O\left( \sqrt{\frac{\ln(1/\delta)}{p_kN_1}}\right) - \sqrt{\frac{2\ln(1/\gamma)}{p_kT_kN_2} + O\left(\frac{\ln(1/\delta)\ln(1/\gamma)}{N_2p_k^{2}\sqrt{N_1}}\right)}  \right) \\ 
    &\geq p_kT_kN_2 \left(1 - O\left( \sqrt{\frac{1}{p_kN_1}}\right) - O\left( \sqrt{\frac{\ln(1/\gamma)}{N_2p_k^{3/2}}}\right)\right) \qedhere
\end{align}
We use the fact that $p_k > p_* \geq 1/N_1$ on $(74)$.
\end{proof}

\begin{lemma}[$w_k^2P(B_k^{(2)}=0)$ Upper Bound]\label{mubound}
If \Cref{tlowerbound} holds for $p_k > p_*$ and if $N_2 = \omega(N_1^{3/4})$,
\begin{align}
    p_k^x P(B_k^{(2)}=0) &\leq O\left(\frac{1}{N_1^{x}}\right) + O\left(\frac{\sqrt{N_1}}{N_2^2}\right).
\end{align}
\end{lemma}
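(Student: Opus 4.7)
The plan is to split the range of $p_k$ at the threshold $p_*$ from the preceding lemmas and bound $p_k^x P(B_k^{(2)}=0)$ by different arguments on either side, with the two terms in the claimed bound coming from the two regimes.

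For $p_k \leq p_*$ I would use only the trivial bound $P(B_k^{(2)}=0) \leq 1$ together with $p_k^x \leq p_*^x$. Since $p_* = \Theta(\ln(1/\delta)/N_1)$, this immediately yields $p_k^x P(B_k^{(2)}=0) \leq O(1/N_1^x)$, which matches the first term of the claimed bound.

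For $p_k > p_*$, \Cref{tlowerbound} is applicable and gives $\hat{T}_k \geq T_k(1 - O(\sqrt{\ln(1/\delta)/(p_k N_1)}))$. Combined with the definition $T_k = \sqrt{p_k}\sigma_k/\sum_i \sqrt{p_i}\sigma_i$ and the boundedness of $\sum_i \sqrt{p_i}\sigma_i$ (via $\sigma_i \leq C_i^{(\sigma)}$ and constant $K$), this gives $\lceil N_2 \hat{T}_k \rceil \geq c\, N_2 \sqrt{p_k}$ for some universal constant $c > 0$. Since $B_k^{(2)} \sim \textrm{Binomial}(\lceil N_2 \hat{T}_k \rceil, p_k)$, the elementary Chernoff bound $(1-p_k)^n \leq e^{-np_k}$ yields $P(B_k^{(2)}=0) \leq e^{-c N_2 p_k^{3/2}}$, so $p_k^x P(B_k^{(2)}=0) \leq p_k^x e^{-c N_2 p_k^{3/2}}$.

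I would then convert this exponential into the desired polynomial form using $e^{-y} \leq C_m y^{-m}$ for an appropriately chosen $m$, producing $p_k^x e^{-c N_2 p_k^{3/2}} \leq C_m \cdot p_k^{x - 3m/2}/N_2^m$. When $x - 3m/2 \leq 0$, the right-hand side is maximized at $p_k = p_* = \Theta(1/N_1)$, giving $O(N_1^{3m/2 - x}/N_2^m)$; tuning $m$ to match the $N_2^{-2}$ exponent in the target (effectively $m=2$) yields the $O(\sqrt{N_1}/N_2^2)$ second term. The hypothesis $N_2 = \omega(N_1^{3/4})$ enters here to certify that $c N_2 p_*^{3/2}$ is large enough for the polynomial bound on $e^{-y}$ to be nontrivial at the worst-case $p_k = p_*$.

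The main obstacle will be the careful bookkeeping near the boundary $p_k \approx p_*$, where the relative error in \Cref{tlowerbound} becomes comparable to $T_k$ itself, so the lower bound $\hat{T}_k \geq cT_k$ degrades; one either restricts the Chernoff regime to $p_k \geq C p_*$ for a sufficiently large constant $C$ (absorbing the intermediate boundary layer into the first term) or chains the inequalities more delicately so that the constants in the exponential bound remain universal. A secondary subtlety is handling the degenerate case $\hat{\sigma}_k = 0$ (where $\hat{T}_k = 0$ and $B_k^{(2)} = 0$ deterministically); this is reconciled by falling back to the $p_k \leq p_*$ branch or noting that such strata do not contribute to the MSE since $\sigma_k$ must then also be small.
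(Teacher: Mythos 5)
Your first two regimes coincide exactly with the paper's proof: below $p_*$ the trivial bound $P(B_k^{(2)}=0)\le 1$ together with $p_k\le p_*=\Theta(\ln(1/\delta)/N_1)$ gives the $O(N_1^{-x})$ term, and above $p_*$ both you and the paper invoke \Cref{tlowerbound} to lower-bound the exponent and reduce to $p_k^x(1-p_k)^{c N_2\sqrt{p_k}}\le p_k^x e^{-cN_2 p_k^{3/2}}$. The gap is in your final conversion step, and it is genuine. With $m=2$, the inequality $e^{-y}\le C_m y^{-m}$ gives $p_k^x e^{-cN_2p_k^{3/2}}\le C\, p_k^{x-3}N_2^{-2}$, which at $p_k=p_*=\Theta(1/N_1)$ is $O(N_1^{3-x}/N_2^2)$, not $O(N_1^{1/2}/N_2^2)$; for the values actually used downstream ($x=1,2$) the exponent $3-x$ exceeds $1/2$, and no other choice of $m$ repairs this under the sole hypothesis $N_2=\omega(N_1^{3/4})$ (absorbing the excess powers of $N_1$ into $N_2^{m-2}$ requires $m\le 4(x-1)/3$, incompatible with $m\ge 2$ when $x\le 5/2$). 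Your closing remark that $N_2=\omega(N_1^{3/4})$ makes $cN_2p_*^{3/2}$ large is also backwards: $N_2 p_*^{3/2}=\Theta(N_2/N_1^{3/2})=o(1)$ under that hypothesis alone, so the exponential factor near $p_*$ is $1-o(1)$.

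The deeper issue is that the supremum of $p^x e^{-cN_2p^{3/2}}$ over $p\ge p_*$ is not attained at $p_*$: it is attained at $p\asymp N_2^{-2/3}$ (which lies in the feasible region whenever $N_2\lesssim N_1^{3/2}$) and equals $\Theta(N_2^{-2x/3})$. Concretely, with $x=2$, $N_1=N_2=N$, and a stratum with $p_k\asymp N^{-2/3}$ and $T_k\asymp N^{-1/3}$, the expected number of Stage~2 positives is $\Theta(1)$, so $P(B_k^{(2)}=0)=\Theta(1)$ and $p_k^2P(B_k^{(2)}=0)\asymp N^{-4/3}$, which exceeds the claimed $O(N^{-2})+O(N^{-3/2})$. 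So the stated inequality cannot be reached by this route; the paper's own proof leaps over the same spot by asserting that an ``exponentially decreasing'' quantity is $O(\sqrt{N_1}/N_2^2)$ without justification. What your argument does correctly deliver is $p_k^x P(B_k^{(2)}=0)\le O(N_1^{-x})+O(N_2^{-2x/3})$, and that weaker bound is all \Cref{finaltheorem} actually needs, since $N_2^{-4/3}=O(N_2^{-1})$ and the $x=1$ instance appears inside a square. Your attention to the boundary layer $p_k\approx p_*$ and to the degenerate case $\hat{\sigma}_k=0$ is more careful than the paper's, which addresses neither.
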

\begin{proof}
\begin{align}
    p_k^x P(B_k^{(2)}=0) &= p_k^x (1-p_k)^{\ceil{\hat{T}_kN_2}} \leq p_k^x (1-p_k)^{\hat{T}_kN_2}.
\end{align}
If $p_k \leq p_* = \frac{2\ln(1/\delta)+2\sqrt{\ln(1/\delta)}+2}{N_1}$,
\begin{align}
    p_k^x (1-p_k)^{\hat{T}_kN_2} \leq O\left(\frac{1}{N_1^{x}}\right).
\end{align}
Else if $p_k > p_*$, which satisfies the conditions of \Cref{tlowerbound},
\begin{align}
    p_k^x (1-p_k)^{\hat{T}_kN_2} &\leq (1 - p_k)^{T_kN_2\left(1 - O\left(\frac{1}{\sqrt{p_kN_1}}\right)\right)} \\
    &\leq (1 - p_k)^{N_2\left(O\left(\sqrt{p_k}\right) - O\left(\frac{1}{\sqrt{N_1}}\right)\right)} \\
    &\leq (1 - p_k)^{O\left(\frac{N_2}{N_1^{1/4}}\right)} .
\end{align}
We assume that $N_2 = \omega(N_1^{3/4})$ which means that (80) is decreasing exponentially. Hence, we can say that $(1 - p_k)^{O\left(\frac{N_2}{N_1^{1/4}}\right)} = O\left(\frac{\sqrt{N_1}}{N_2^2}\right)$.
\begin{align}
    p_k^x P(B_k^{(2)}=0) &\leq O\left(\frac{1}{N_1^{x}}\right) + O\left(\frac{\sqrt{N_1}}{N_2^2}\right) \qedhere
\end{align}
\end{proof}

\subsection{Final Result}
\begin{theorem}\label{finaltheorem}
For small $\delta > 0$ over Stage 1 (\Cref{pupperbound}, \Cref{plowerbound}, \Cref{wbound}, \Cref{blowerbound}, \Cref{sigmabound}, \Cref{psigmabound}, and \Cref{tlowerbound}) and if $N_2 = \omega(N_1^{3/4})$, the expectation of the mean squared error of \algname over Stage 2 is
\begin{align}
    \Exp[(\hmuall - \muall)^2] &\leq O\left(N_1^{-1}\right) + O\left(N_2^{-1}\right) + O\left(N_1^{1/2}N_2^{-3/2}\right).
\end{align}
\end{theorem}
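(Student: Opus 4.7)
The plan is to work conditionally on the high-probability event that every Stage 1 bound (Lemmas 1--7) holds simultaneously, which by a union bound fails with probability at most $O(K\delta)$; all subsequent expectations are then over the independent Stage 2 draws. The natural starting point is the algebraic identity
\begin{align*}
\hmuall - \muall &= \sum_{k=1}^K \hat{w}_k(\hat{\mu}_k - \mu_k) + \sum_{k=1}^K (\hat{w}_k - w_k)\mu_k,
\end{align*}
after which I apply $(a+b)^2 \leq 2a^2 + 2b^2$ to split the MSE into a ``weight-error'' term and a ``Stage 2 noise'' term.

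The weight-error term is a deterministic function of the Stage 1 samples, so under our conditioning it equals $\bigl(\sum_k (\hat{w}_k - w_k)\mu_k\bigr)^2$, and applying \Cref{wbound} together with $|\mu_k| \leq C_k^{(\mu)}$ and the triangle inequality yields an $O(N_1^{-1})$ contribution. For the Stage 2 noise term, conditional independence of $\hat{\mu}_k$ across strata lets me write
\begin{align*}
\Exp\!\left[\Bigl(\sum_k \hat{w}_k(\hat{\mu}_k - \mu_k)\Bigr)^2\right] = \sum_k \hat{w}_k^2\, \Exp[(\hat{\mu}_k - \mu_k)^2] + \Bigl(\sum_k \hat{w}_k \Exp[\hat{\mu}_k - \mu_k]\Bigr)^2,
\end{align*}
and $\Exp[(\hat{\mu}_k - \mu_k)^2] = \mu_k^2\, P(B_k^{(2)}=0) + \sigma_k^2\, \Exp[1/B_k^{(2)} \mid B_k^{(2)}>0]\, P(B_k^{(2)}>0)$, so the analysis reduces to bounding (i) the zero-sample bias $P(B_k^{(2)}=0)$ and (ii) the conditional inverse-count $\Exp[1/B_k^{(2)} \mid B_k^{(2)}>0]$.

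For (i) I would invoke \Cref{mubound} with $x \in \{1, 2\}$ to absorb the $w_k, w_k^2$ prefactors, bounding those contributions by $O(N_1^{-1}) + O(\sqrt{N_1}/N_2^2)$. For (ii) I split on $p_k > p_*$ and $p_k \leq p_*$: in the large-$p_k$ regime \Cref{b2lowerbound} gives $B_k^{(2)} \geq p_k T_k N_2(1 - o(1))$ with probability $\geq 1-\gamma$ (with $\gamma$ chosen polynomially small in $N_2^{-1}$), so $\Exp[1/B_k^{(2)} \mid B_k^{(2)}>0]$ equals $1/(p_k T_k N_2)$ up to multiplicative corrections of order $\sqrt{1/(p_k N_1)}$ and $\sqrt{\ln(1/\gamma)/(N_2 p_k^{3/2})}$. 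The leading term, combined with \Cref{tlowerbound} and the exact calculation from the optimal-allocation Proposition, reproduces $(\sum_k \sqrt{p_k}\sigma_k)^2/(\pall^2 N_2) = O(N_2^{-1})$. In the small-$p_k$ regime, \Cref{psigmabound} is used to upper-bound $\sqrt{\hat{p}_k}\hat{\sigma}_k$ directly, confining the $p_k \leq p_*$ contribution to $O(N_1^{-1})$.

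The main obstacle is propagating the \Cref{tlowerbound} and \Cref{b2lowerbound} relative error factors through the Stage 2 variance sum and summing over strata: the $\sqrt{1/(p_k N_1)}$ correction from $\hat{T}_k$ and the $\sqrt{\ln(1/\gamma)/(N_2 p_k^{3/2})}$ correction from the Stage 2 binomial tail are multiplied by $\hat{w}_k^2 \sigma_k^2/(p_k T_k N_2)$, which carries negative powers of $p_k$, so the worst stratum threatens to blow up; the assumption $N_2 = \omega(N_1^{3/4})$ together with the cutoff $p_*$ is exactly what controls it, and the residual $O(N_1^{1/2} N_2^{-3/2})$ term in the theorem is what remains after this cancellation. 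A secondary technical nuisance is bounding $\Exp[1/B_k^{(2)} \mid B_k^{(2)}>0]$ rigorously: I would handle this by splitting $\{B_k^{(2)} > 0\}$ into $\{B_k^{(2)} \geq F_k\}$ and $\{0 < B_k^{(2)} < F_k\}$, using \Cref{b2lowerbound} on the first piece and the crude bound $1/B_k^{(2)} \leq 1$ (with total probability $\leq \gamma$) on the second.
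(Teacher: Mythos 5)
Your proposal is correct and follows essentially the same route as the paper's proof: condition on the Stage 1 event, split the MSE into a Stage 1 weight-error/bias piece (controlled by \Cref{wbound} and \Cref{mubound}) and a Stage 2 variance piece, and bound $\Exp[1/B_k^{(2)}\mid B_k^{(2)}>0]$ by splitting on $p_*$ and on $\{B_k^{(2)}\ge F_k\}$ via \Cref{tlowerbound} and \Cref{b2lowerbound} --- the paper merely centers at $\Exp[\hat{\mu}_k]$ so the cross term vanishes exactly instead of invoking $(a+b)^2\le 2a^2+2b^2$, which is cosmetic. One small correction: your displayed ``identity'' for $\Exp\bigl[\bigl(\sum_k \hat{w}_k(\hat{\mu}_k-\mu_k)\bigr)^2\bigr]$ omits the term $-\sum_k \hat{w}_k^2\,\Exp[\hat{\mu}_k-\mu_k]^2$ and so should be stated as an upper bound rather than an equality; since the omitted term is nonpositive, this does not affect the argument.
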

\begin{proof}
By \Cref{pupperbound}, \Cref{plowerbound}, \Cref{wbound}, \Cref{blowerbound}, \Cref{sigmabound}, \Cref{psigmabound}, and \Cref{tlowerbound}, we have that for small $\delta > 0$:
    \begin{align}
        \sqrt{p_k}\sqrt{ 1 - O\left( \sqrt{\frac{\ln(1/\delta)}{p_kN_1}} \right)} \leq \sqrt{\hat{p}_k} \leq \sqrt{p_k}\sqrt{ 1 + O\left( \sqrt{\frac{\ln(1/\delta)}{p_kN_1}} \right)}.
    \end{align}

    \begin{align}
    w_k - O\left( \frac{\ln(1/\delta)\sqrt{p_k}}{\sqrt{N_1}} \right) \leq \hat{w_k} \leq w_k + O\left( \frac{\ln(1/\delta)\sqrt{p_k}}{\sqrt{N_1}} \right)
    \end{align}

    \begin{align}
    B_k^{(1)} \geq p_kN_1 - \sqrt{2\ln(1/\delta)p_kN_1}.
\end{align}

\begin{align}
    \sigma_k \cdot \sqrt{1 - O\left(\sqrt{\frac{\ln(1/\delta)}{p_kN_1}}\right)} &\leq \hat{\sigma_k}   \leq \sigma_k \cdot \sqrt{1 + O\left(\sqrt{\frac{\ln(1/\delta)}{p_kN_1}}\right)}.
\end{align}

\begin{align}
    \hat{T}_k \geq T_k \left(1 - O\left( \sqrt{\frac{\ln(1/\delta)}{p_kN_1}}\right)\right).
\end{align}

Let $\mathcal{E}$ be the event that all these inequalities are satisfied. For the remainder of the proof, we condition on $\mathcal{E}$ and take the expectation over the randomness in Stage 2. We now begin to decompose the mean squared error.
\begin{align}
    \Exp[(\hmuall - \muall)^2] &= \Exp\left[(\sum_{k=1}^K \hat{w}_k\hat{\mu}_k - w_k\mu_k)^2 \right].
\end{align}
We know that $\Exp[\hat{\mu}_k] = P(B_k^{(2)} > 0) \cdot \mu_k$. As a result, $\Exp[\hat{\mu}_k] + \mu_kP(B_k^{(2)}=0) = \mu_k$. Hence,
\begin{align}
    = \Exp\left[\left( \sum_{k=1}^K \hat{w}_k(\hat{\mu}_k - \Exp[\hat{\mu}_k]) + (\hat{w}_k - w_k)\Exp[\hat{\mu_k}] - w_k\mu_k P(B_k=0) \right)^2 \right]
\end{align}
We now use that $\Exp[(A+B)^2] = \Exp[A^2 + 2AB + B^2]$, so if $\Exp[A]=0$ then $\Exp[(A+B)^2] = \Exp[A^2] + \Exp[B^2]$. We set $A=\hat{w}_k(\hat{\mu}_k - \Exp[\hat{\mu}_k])$ and $B=(\hat{w}_k - w_k)\Exp[\hat{\mu_k}] - w_k\mu_k P(B_k=0)$. Note that $\Exp[A] = 0$. Thus,
\begin{align}
    &= \Exp\left[(\sum_{k=1}^K \hat{w}_k(\hat{\mu}_k - \Exp[\hat{\mu}_k]) )^2\right] + \Exp\left[(\sum_{k=1}^K (\hat{w}_k - w_k)\Exp[\hat{\mu_k}] - w_k\mu_k P(B_k=0) )^2\right] \\
    &\leq \sum_{k=1}^K \hat{w}_k^2\Var(\hat{\mu}_k) + \left[\max_{k} C_k^{\mu_k^2}\right] \Exp\left[\left(\sum_{k=1}^K w_k P(B_k^{(2)} = 0)  + (\hat{w}_k - w_k)\right)^2\right].
\end{align}
We will now separately upper bound the two terms in this expression. For the first term, notice that
\begin{align}
    \Var[\hat{\mu}_k] &= \Exp[\hat{\mu}_k^2] - \Exp[\hat{\mu}_k]^2 \\
    &= P(B_k^{(2)} > 0)(\Exp[\frac{\sigma_k^2}{B_k^{(2)}} | B_k^{(2)} > 0] + \mu_k^2) - P(B_k^{(2)} > 0)^2\mu_k^2 \\
    &= P(B_k^{(2)} > 0)\Exp[\frac{\sigma_k^2}{B_k^{(2)}} | B_k^{(2)} > 0] + P(B_k^{(2)} > 0)(1 - P(B_k^{(2)} > 0))\mu_k^2 \\
    &= P(B_k^{(2)} > 0)\Exp[\frac{\sigma_k^2}{B_k^{(2)}} | B_k^{(2)} > 0] + P(B_k^{(2)} > 0)P(B_k^{(2)} = 0)\mu_k^2 \\
    &\leq \Exp[\frac{\sigma_k^2}{B_k^{(2)}} | B_k^{(2)} > 0] + P(B_k^{(2)} = 0)\mu_k^2 \\
    \sum_{k=1}^K \hat{w}_k^2 \Var[\hat{\mu}_k] &\leq \sum_{k=1}^K  \hat{w}_k^2 \left( \Exp[\frac{\sigma_k^2}{B_k^{(2)}} | B_k^{(2)} > 0] +  P(B_k^{(2)} = 0)\mu_k^2 \right) \\
    &\leq \sum_{k=1}^K  \left(O\left(p_k^2\right) + O\left(\frac{p_k^{3/2}}{\sqrt{N_1}}\right)\right) \left( \Exp[\frac{\sigma_k^2}{B_k^{(2)}} | B_k^{(2)} > 0] + P(B_k^{(2)} = 0)\mu_k^2 \right) \\
    &\leq \sum_{k=1}^K  \left(O\left(p_k^2\right) + O\left(\frac{p_k^{3/2}}{\sqrt{N_1}}\right)\right) \left( \Exp[\frac{\sigma_k^2}{B_k^{(2)}} | B_k^{(2)} > 0] + P(B_k^{(2)} = 0)\mu_k^2 \right)
\end{align}

We split the cases where $p_k$ is small and $p_k$ is large via $p_*=\frac{2\ln(1/\delta)+2\sqrt{\ln(1/\delta)}+2}{N_1}$. This value of $p_*$ is chosen as according to the conditions of \Cref{tlowerbound}.
\begin{align}
    &\leq \sum_{p_k > p_*}^K  \left(O\left(p_k^2\right) + O\left(\frac{p_k^{3/2}}{\sqrt{N_1}}\right)\right) \left( \Exp[\frac{\sigma_k^2}{B_k^{(2)}} | B_k^{(2)} > 0] + P(B_k^{(2)} = 0)\mu_k^2 \right) + \sum_{p_k \leq p_*}^K O\left(\frac{1}{N_1^2}\right)
\end{align}
Note that $p_k^2P(B_k^{(2)}=0) \leq O\left(\frac{1}{N_1}\right) + O\left(\frac{\sqrt{N_1}}{N_2^2}\right)$ according to \Cref{mubound}.
\begin{align}
    &\leq \sum_{p_k > p_*}^K  \left(O\left(p_k^2\right) + O\left(\frac{p_k^{3/2}}{\sqrt{N_1}}\right)\right) \Exp[\frac{\sigma_k^2}{B_k^{(2)}} | B_k^{(2)} > 0] + O\left(\frac{1}{N_1}\right) + O\left(\frac{\sqrt{N_1}}{N_2^2}\right)
\end{align}

We will now apply \Cref{b2lowerbound} to upper bound $\Exp[\frac{\sigma_k^2}{B_k^{(2)}} | B_k^{(2)} > 0]$. Namely, we will split the expectation into two cases, one where $B_k^{(2)} \geq F_k$ and another where $B_k < F_k$. Specifically, we apply it such that $B_k^{(2)} \geq F_k = p_kT_kN_2 \left(1 - O\left( \sqrt{\frac{1}{p_kN_1}}\right) - O\left( \sqrt{\frac{\ln(1/\gamma)}{N_2p_k^{3/2}}}\right)\right)$ with failure probability $\gamma=\frac{1}{e^{\sqrt{N_1}}} \leq O\left(\frac{1}{N_1}\right)$.

{\small
\begin{align}
    &\leq \sum_{p_k > p_*}^K  \left(O\left(p_k^2\right) + O\left(\frac{p_k^{3/2}}{\sqrt{N_1}}\right)\right)  \left( \Exp[\frac{\sigma_k^2}{B_k^{(2)}} | B_k^{(2)} > 0]\right) +  O\left(\frac{1}{N_1}\right) + O\left(\frac{\sqrt{N_1}}{N_2^2}\right)\\
    &\leq \sum_{p_k > p_*}^K  \left(O\left(p_k^2\right) + O\left(\frac{p_k^{3/2}}{\sqrt{N_1}}\right)\right)  \left( P(B_k^{(2)} > F_k) \Exp[\frac{\sigma_k^2}{B_k^{(2)}} | B_k^{(2)} > F_k] + O\left(\frac{1}{N_1}\right) C_k^{\sigma^2} \right) +  O\left(\frac{1}{N_1}\right) + O\left(\frac{\sqrt{N_1}}{N_2^2}\right) \\
    &\leq \sum_{p_k > p_*}^K  \left(O\left(p_k^2\right) + O\left(\frac{p_k^{3/2}}{\sqrt{N_1}}\right)\right)  \left(\Exp[\frac{\sigma_k^2}{B_k^{(2)}} | B_k^{(2)} > F_k] + O\left(\frac{1}{N_1}\right) C_k^{\sigma^2} \right) +  O\left(\frac{1}{N_1}\right) + O\left(\frac{\sqrt{N_1}}{N_2^2}\right) \\
    &\leq \sum_{p_k > p_*}^K  \left(O\left(p_k^2\right) + O\left(\frac{p_k^{3/2}}{\sqrt{N_1}}\right)\right)  \Exp[\frac{\sigma_k^2}{B_k^{(2)}} | B_k^{(2)} > F_k] +  O\left(\frac{1}{N_1}\right) + O\left(\frac{\sqrt{N_1}}{N_2^2}\right) \\
    &\leq \sum_{p_k > p_*}^K  \left(O\left(p_k^2\right) + O\left(\frac{p_k^{3/2}}{\sqrt{N_1}}\right)\right) \frac{\sigma_k^2}{p_kT_kN_2\left(1 - O\left( \sqrt{\frac{1}{p_kN_1}}\right) - O\left( \sqrt{\frac{\ln(1/\gamma)}{N_2p_k^{3/2}}}\right)\right)} +  O\left(\frac{1}{N_1}\right) + O\left(\frac{\sqrt{N_1}}{N_2^2}\right) \\
    &\leq \sum_{p_k > p_*}^K  \left(O\left(\frac{\sqrt{p_k}}{N_2}\right) + O\left(\frac{1}{N_2\sqrt{N_1}}\right)\right) \left(1 + O\left( \sqrt{\frac{1}{p_kN_1}}\right) + O\left( \sqrt{\frac{\ln(1/\gamma)}{N_2p_k^{3/2}}}\right)\right) +  O\left(\frac{1}{N_1}\right) + O\left(\frac{\sqrt{N_1}}{N_2^2}\right) \\
    &\leq \sum_{p_k > p_*}^K  \left(O\left(\frac{\sqrt{p_k}}{N_2}\right) + O\left(\frac{1}{N_2\sqrt{N_1}}\right)\right) \left(1 + O\left( \sqrt{\frac{1}{p_kN_1}}\right) + O\left( \sqrt{\frac{\ln(1/\gamma)}{N_2p_k^{3/2}}}\right)\right) +  O\left(\frac{1}{N_1}\right) + O\left(\frac{\sqrt{N_1}}{N_2^2}\right) \\
    &\leq \sum_{p_k > p_*}^K O\left(\frac{1}{N_2} + \frac{1}{N_2\sqrt{N_1}} + \frac{\sqrt{N_1}}{N_2\sqrt{N_2}}\right) + O\left(\frac{1}{N_2\sqrt{N_1}}\right)\left(1 + O\left( \sqrt{\frac{1}{p_kN_1}}\right) + O\left( \sqrt{\frac{\ln(1/\gamma)}{N_2p_k^{3/2}}}\right)\right) +  O\left(\frac{1}{N_1}\right) + O\left(\frac{\sqrt{N_1}}{N_2^2}\right) \\
    &\leq \sum_{p_k > p_*}^K O\left(\frac{1}{N_2} + \frac{\sqrt{N_1}}{N_2\sqrt{N_2}}\right) + O\left(\frac{1}{N_2\sqrt{N_1}} + \frac{1}{N_2\sqrt{N_1}} + \frac{\sqrt{N_1}}{N_2\sqrt{N_2}} \right) +  O\left(\frac{1}{N_1}\right) + O\left(\frac{\sqrt{N_1}}{N_2^2}\right) \\
    &\leq O\left(\frac{1}{N_1}\right) + O\left(\frac{1}{N_2}\right) + O\left(\frac{\sqrt{N_1}}{N_2\sqrt{N_2}} \right)
\end{align}
}
We will now upper bound the second term in the expression on (90). We apply \Cref{mubound} on $(108)$.
\begin{align}
    \Exp\left[(\sum_{k=1}^K (\hat{w}_k - w_k)\Exp[\hat{\mu_k}] - w_k\mu_k P(B_k=0) )^2\right] &\leq \left[\max_{k} C_k^{\mu_k^2}\right] \Exp\left[\left(\sum_{k=1}^K w_k P(B_k^{(2)} = 0)  + (\hat{w}_k - w_k) \right)^2\right] \\
    &\leq \left[\max_{k} C_k^{\mu_k^2}\right] \Exp\left[\left( O\left(\frac{\sqrt{N_1}}{N_2}\right)  + O\left(\frac{1}{\sqrt{N_1}}\right)\right)^2\right] \\
    &\leq O\left(\frac{1}{N_1}\right) + O\left(\frac{1}{N_2}\right)
\end{align}
Thus, by combining (110) and (113), we finally have that
\begin{align}
    \Exp[(\hmuall - \muall)^2] &= \Exp\left[(\sum_{k=1}^K \hat{w}_k\hat{\mu}_k - w_k\mu_k)^2 \right] \\
    &=  \sum_{k=1}^K \hat{w}_k^2\Var(\hat{\mu}_k) + \left[\max_{k} C_k^{\mu_k^2}\right] \Exp\left[\left(\sum_{k=1}^K w_k P(B_k^{(2)} = 0)  + (\hat{w}_k - w_k)\right)^2\right] \\
    &\leq  \left[O\left(\frac{1}{N_1}\right) + O\left(\frac{1}{N_2}\right) + O\left(\frac{\sqrt{N_1}}{N_2\sqrt{N_2}}\right)\right] + \left[O\left(\frac{1}{N_1}\right) + O\left(\frac{1}{N_2}\right)\right] \\
    &\leq O\left(N_1^{-1}\right) + O\left(N_2^{-1}\right) + O\left(N_1^{1/2}N_2^{-3/2}\right) \qedhere
\end{align}

\end{proof}

\subsection{Sample Reuse}
Note that our algorithm shown in \Cref{pseudocode} involves a parameter $SampleReuse$, which decides whether we reuse samples from Stage 1 in Stage 2 and vice-versa. In our analysis, we assumed that $SampleReuse$ was false. We show that $SampleReuse$ does the change the asymptotic error rates of $\algname$.
\begin{corollary}\label{corrollayx}
    If $SampleReuse$ is true, $\algname$ still achieves the the following upper bound on the the mean squared error for small $\delta > 0$ over Stage 1 and expectation over Stage 2,
    \begin{align}
    \Exp[(\hmuall - \muall)^2] &\leq O\left(N_1^{-1}\right) + O\left(N_2^{-1}\right) + O\left(N_1^{1/2}N_2^{-3/2}\right).
\end{align}
\end{corollary}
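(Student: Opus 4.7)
The plan is to observe that turning on SampleReuse only augments the information used by the final estimator without altering any quantity computed during Stage 1. In particular, the allocation fraction $\hat{T}_k$ on line 12 is a function only of $\hat{p}_k$ and $\hat{\sigma}_k$ estimated from $R_k^{(1)}$, so the high-probability event $\mathcal{E}$ that all of \Cref{pupperbound,plowerbound,wbound,blowerbound,sigmabound,psigmabound,tlowerbound} hold simultaneously has the same probability as in the non-reuse setting. I would therefore condition on $\mathcal{E}$ and re-derive the Stage 2 MSE decomposition of \Cref{finaltheorem} with the reuse-modified quantities substituted in, checking that each intermediate bound is preserved.

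Concretely, let $B_k^{(2),R} = B_k^{(1)} + \widetilde{B}_k^{(2)}$ denote the total predicate-matching count under reuse, where $\widetilde{B}_k^{(2)} \sim \Bin(\ceil{N_2 \hat{T}_k}, p_k)$ matches the non-reuse Stage 2 count. First I would verify that each ingredient in the proof of \Cref{finaltheorem} improves monotonically. The updated $\hat{p}_k = B_k^{(2),R} / |R_k^{(2),R}|$ is a binomial proportion over $N_1 + \ceil{N_2 \hat{T}_k}$ Bernoulli draws, so the concentration inequalities of \Cref{pupperbound} and \Cref{plowerbound}, and hence the $\hat{w}_k$ bound of \Cref{wbound}, still hold. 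The match count satisfies $B_k^{(2),R} \geq \widetilde{B}_k^{(2)}$, which stochastically dominates the non-reuse $B_k^{(2)}$, so \Cref{b2lowerbound} and \Cref{mubound} apply with no degradation. Finally, $\Exp[\sigma_k^2 / B_k^{(2),R} \mid B_k^{(2),R} > 0] \leq \Exp[\sigma_k^2 / \widetilde{B}_k^{(2)} \mid \widetilde{B}_k^{(2)} > 0]$ by stochastic dominance, so the variance term in the decomposition does not worsen. Plugging these into the bias/variance expansion used in the proof of \Cref{finaltheorem} reproduces every displayed bound with the same asymptotic constants.

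The hard part is a measurability obstruction: the Stage 1 values $X_{k,i}^{(1)}$ now appear in both the conditioning event $\mathcal{E}$ (through $\hat{\sigma}_k$) and in the final estimator $\hat{\mu}_k^R$, whereas in the non-reuse analysis $\hat{\mu}_k$ was a function of Stage 2 samples alone and therefore independent of $\mathcal{E}$ given $\hat{T}_k$. I would handle this by splitting $\hat{\mu}_k^R = \alpha_k \hat{\mu}_k^{(1)} + (1 - \alpha_k) \hat{\mu}_k^{(2)}$ with $\alpha_k = B_k^{(1)} / B_k^{(2),R}$, routing the Stage 2 piece through the existing proof verbatim (it remains conditionally independent of $\mathcal{E}$ given $\hat{T}_k$), and bounding the Stage 1 piece crudely. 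Its contribution to $\hat{w}_k^2 \Var[\hat{\mu}_k^R]$ is at most $\hat{w}_k^2 \alpha_k^2 \sigma_k^2 / B_k^{(1)}$, which on $\mathcal{E}$ combined with the $p_k > p_*$ case is of order $p_k^2 \cdot B_k^{(1)} / (B_k^{(2),R})^2 = O(p_k N_1 / N_2^2)$ per stratum. The residual bias coming from conditioning $X_{k,i}^{(1)}$ on $\mathcal{E}$ is absorbed by a standard $P(\mathcal{E}^c)$ argument and is of the same order or smaller. Summed over $k$ this contributes at most $O(N_1 / N_2^2)$, which is dominated by the $O(N_1^{1/2} N_2^{-3/2})$ term already present in the bound, completing the argument.
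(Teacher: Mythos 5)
Your proposal is far more detailed than the paper's own proof, which is a two-sentence informal argument: the paper simply asserts that the Stage 1 high-probability bounds are unaffected by reuse and that, since the Stage 1 and Stage 2 samples are i.i.d., folding Stage 1 samples into $\hat{\mu}_k$ can only tighten the estimates up to constants. You correctly identify the one genuine subtlety that this glosses over — with reuse, the Stage 1 samples $X^{(1)}_{k,i}$ appear both inside the conditioning event $\mathcal{E}$ (through $\hat{\sigma}_k$ and $\hat{p}_k$) and inside the final estimator, so the identity $\Exp[\hat{\mu}_k] = P(B_k^{(2)}>0)\mu_k$ used at the start of the proof of \Cref{finaltheorem} no longer follows from independence. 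Your split $\hat{\mu}_k^R = \alpha_k\hat{\mu}_k^{(1)} + (1-\alpha_k)\hat{\mu}_k^{(2)}$ is the right device for handling this, and the stochastic-dominance observations ($B_k^{(2),R} \geq \widetilde{B}_k^{(2)}$, hence \Cref{b2lowerbound} and \Cref{mubound} are only improved) are correct.

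There is, however, one step that does not go through as written. You bound the Stage 1 contribution by $O(p_k N_1/N_2^2)$ per stratum, hence $O(N_1/N_2^2)$ in total, and claim this is dominated by $O(N_1^{1/2}N_2^{-3/2})$. That domination requires $N_2 = \Omega(N_1)$, but the standing hypothesis is only $N_2 = \omega(N_1^{3/4})$; in the regime $N_1^{3/4} \ll N_2 \ll N_1$ (e.g.\ $N_2 = N_1^{0.8}$) the term $N_1/N_2^2 = N_1^{-0.6}$ exceeds all three terms of the claimed bound. The slip comes from lower-bounding $B_k^{(2),R}$ by the Stage 2 count $\widetilde{B}_k^{(2)} \approx p_kT_kN_2$ alone. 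Using instead $B_k^{(2),R} \geq B_k^{(1)}$ gives
\begin{align}
\hat{w}_k^2\,\alpha_k^2\,\frac{\sigma_k^2}{B_k^{(1)}} = \hat{w}_k^2\,\frac{B_k^{(1)}\sigma_k^2}{\left(B_k^{(2),R}\right)^2} \leq \hat{w}_k^2\,\frac{\sigma_k^2}{B_k^{(1)}} = O\!\left(\frac{p_k}{N_1}\right),
\end{align}
which sums to $O(N_1^{-1})$ and sits inside the claimed bound for all admissible $N_1, N_2$. One further bookkeeping point: conditional on Stage 1, $\hat{\mu}_k^{(1)}$ is deterministic, so its contribution is really a squared bias $\alpha_k^2(\hat{\mu}_k^{(1)}-\mu_k)^2$ rather than a variance; to control it you need to enlarge $\mathcal{E}$ with a subgaussian concentration bound $|\hat{\mu}_k^{(1)}-\mu_k| \leq O(\sqrt{\ln(1/\delta)/B_k^{(1)}})$, which is routine but is not among the lemmas currently defining $\mathcal{E}$. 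With those two repairs your argument is a complete and substantially more rigorous proof than the one in the paper.
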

\begin{proof}
The high probability bounds we have on $\hat{p}_k$, $\hat{w}_k$, $B_k^{(1)}$, $\hat{\sigma}_k$, $\hat{T}_k$ still hold with sample reuse, so the rates presented in \Cref{finaltheorem} still hold. Furthermore, note that the estimate $\hat{\mu}_k$ is actually a stochastic sum where the number of summands are dependent on Stage 1. As a result, the samples from Stage 1 and Stage 2 are identically and independently distributed, so we can reuse samples when estimating $\hat{\mu}_k$. This will improve the quality of estimates in practice and the bounds we derive for $\hat{p}_k$, $\hat{w}_k$, and $\hat{\mu}_k$ will all be tighter up to constant factors.
\end{proof}

\section{Bootstrap Validity}

\subsection{Validity}
We show that the bootstrap is asymptotically valid.

\minihead{Overview}
We first summarize our approach, which applies the functional delta method.  Suppose that 
$X_1, ..., X_n \sim F_0$ are random variables drawn from a distribution with CDF $F_0$, 
and $\theta$ is a statistical functional of the CDF. If $\theta$ is Hadamard differentiable,
then the functional delta method of \citet[Theorem 23.9]{van2000asymptotic} will imply that
bootstrap resampling gives valid confidence intervals for $\theta$.  We will express our statistic
in this setting, explicitly define $\theta$, and obtain bootstrap validity by showing Hadamard
differentiability of $\theta$.

\minihead{Notation}
We first re-parameterize the draws so that $X_{k, i} = (Y_{k, i}, P_{k, i})$, where $Y_{k, i} \in \mathbb{R}$ is the statistic and $P_{k, i} \in \{ 0, 1 \}$ is the predicate. Namely, $X_{k, i}$ is the joint variable of the statistic and the predicate. Under this parametrization, we can write the $\theta_T$ for any fixed allocation $T$ as 
\begin{align}
    \theta_T(Y_k, P_k) = \frac{1}{\sum_k \Exp[P_k = 1]} \cdot \sum_k \Exp[P_k = 1] \cdot \Exp[Y_k | P_k = 1] f(T_k, T)
\end{align}
for some deterministic function $f$ which normalizes based on the allocation.

For the rest of the proof, let $F_0$ be the CDF of the samples $X_{k, i}$. Let $\hat{F}_n$ be the empirical
CDF and $\hat{F}^*_n$ be the resampled CDF of \algname's bootstrap.  The validity of bootstrap confidence intervals
is given by the following result.

\begin{theorem}
For any fixed allocation $T$, we have the convergence
\begin{align}
    \sqrt{n}(\theta_T(\hat{F}^*_n) - \theta_T(\hat{F}_n)) \to \dot{\theta}_{T, F_0}(\mathbb{B})
\end{align}
to the same distribution $\dot{\theta}_{T, F_0}(\mathbb{B})$ as the rescaled difference $\sqrt{n}(\theta_T(\hat{F}_n) - \theta_T(F_0))$ between the empirical and population statistics. Namely, the bootstrap is valid.
\end{theorem}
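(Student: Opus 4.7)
The plan is to apply the functional delta method exactly as outlined, namely van der Vaart's Theorem 23.9: if $\theta_T$ is Hadamard differentiable at $F_0$ tangentially to a subspace containing the limiting Brownian bridge $\mathbb{B}$, then both $\sqrt{n}(\theta_T(\hat{F}_n) - \theta_T(F_0))$ and the bootstrap version $\sqrt{n}(\theta_T(\hat{F}^*_n) - \theta_T(\hat{F}_n))$ converge (the latter conditionally on the data) to the common distribution $\dot{\theta}_{T,F_0}(\mathbb{B})$. The entire proof therefore reduces to verifying Hadamard differentiability of $\theta_T$ at $F_0$.

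First I would rewrite $\theta_T$ in a form well-suited to the delta method. Using the identity $\Exp[Y_k \mid P_k = 1] \cdot \Pr(P_k=1) = \Exp[Y_k \mathbf{1}\{P_k=1\}]$, the statistic collapses to
\begin{align}
\theta_T(F) = \frac{\sum_k a_k(F)\, f(T_k, T)}{\sum_k b_k(F)},
\end{align}
where $a_k(F) := \Exp_F[Y_k \mathbf{1}\{P_k=1\}]$ and $b_k(F) := \Exp_F[\mathbf{1}\{P_k=1\}]$ are both linear in $F$. This exhibits $\theta_T$ as the composition of the linear map $\Phi : F \mapsto (a_k(F), b_k(F))_{k=1}^K \in \mathbb{R}^{2K}$ with the rational function $\psi : (\alpha, \beta) \mapsto \sum_k \alpha_k f(T_k, T) / \sum_k \beta_k$ defined on $\{(\alpha,\beta) : \sum_k \beta_k \neq 0\}$.

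Each coordinate of $\Phi$ is a bounded linear functional on the relevant space of CDFs; the subgaussian assumption on $Y_{k,i}$ ensures that $y\,\mathbf{1}\{p=1\}$ is square-integrable against $F_0$, so the collection $\{y \mathbf{1}\{p=1\}, \mathbf{1}\{p=1\}\}_k$ sits inside an $F_0$-Donsker class. Hence $\Phi$ is (trivially) Hadamard differentiable with derivative equal to itself. The map $\psi$ is $C^\infty$ on the open set $\{\sum_k \beta_k > 0\}$, and the standing assumption $\pall = \sum_k p_k \geq C^{(\pall)} > 0$ guarantees that $\Phi(F_0)$ lies strictly inside this set, so $\psi$ is Fr\'echet and hence Hadamard differentiable at $\Phi(F_0)$. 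The chain rule for Hadamard differentiability (van der Vaart Theorem 20.9) then gives that $\theta_T = \psi \circ \Phi$ is Hadamard differentiable at $F_0$, with derivative determined by applying the quotient rule coordinatewise to the linear part $\Phi$.

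The result now follows from a direct application of Theorem 23.9 combined with Donsker's theorem for the empirical process $\sqrt{n}(\hat{F}_n - F_0) \Rightarrow \mathbb{B}$ and its bootstrap analogue (van der Vaart Theorem 23.7). The one technical item requiring care is choosing the Banach space of CDFs in which to carry out the argument so that the tangent space of the delta method contains the limiting Brownian bridge and the linear functionals $a_k$ remain bounded; both are handled by working in $\ell^\infty(\mathcal{F})$ for a suitable $F_0$-Donsker class $\mathcal{F}$ containing the integrands above. This integrability bookkeeping is the main (and essentially only) obstacle, but it is routine once the decomposition $\theta_T = \psi \circ \Phi$ is established.
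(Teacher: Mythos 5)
Your proposal follows the same route as the paper: reduce bootstrap validity to Hadamard differentiability of $\theta_T$ via van der Vaart's Theorems 23.7 and 23.9, then verify differentiability using the chain and product/quotient rules together with the fact that $\pall$ is bounded away from zero. Your explicit decomposition of $\theta_T$ into the linear functionals $a_k(F)$, $b_k(F)$ composed with a smooth rational map is simply a more careful rendering of the step the paper asserts directly (Hadamard differentiability of $\Exp[P_k=1]$ and $\Exp[Y_k \mid P_k=1]$), so the argument is, if anything, slightly more complete than the paper's own.
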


\begin{proof}
By \citet[Theorems 23.7 and 23.9]{van2000asymptotic}, it suffices to show that $\theta_T$ is Hadamard differentiable with respect to the CDF.

For any random variable $P_k$ that has mass at 1 such that $P(P_k = 1) > c > 0$ for some constant $c$, we have that $\Exp[P_k = 1]$ and $\Exp[Y_k | P_k = 1]$ are Hadamard differentiable. Thus, $\sum_k \Exp[P_k = 1]$ and $\sum_k \Exp[P_k = 1] \Exp[Y_k | P_k = 1]$ are Hadamard differentiable.

Since $\Exp[P_k = 1]$ is bounded away from 0, by the chain rule (\citet[Theorem 20.9]{van2000asymptotic}), $\frac{1}{\sum_k \Exp[P_k = 1]}$ is also Hadamard differentiable. The Hadamard differentiability of $\theta_T$ follows from the product rule.
\end{proof}

\minihead{Full procedure}
The full procedure is equivalent to mixing samples from Stage 1 and 2. Note that Stage 1 samples are uniformly sampled, so the full procedure is equivalent to samples drawn with weight $\frac{1}{2} (\hat{T}_k + \frac{1}{k})$. Thus, the bootstrap on the full procedure is valid.

\subsection{Practical Considerations}
We note that the bootstrap is \emph{asymptotically} valid, which does not give guarantees in finite sample settings. In particular, if the stratification results in few (e.g., under 30) samples for any given strata, the confidence intervals may be underestimated.

To address this, we can modify the confidence intervals by adjusting them by $\hat{p}_k C^{(\mu)}$ or an upper bound on $p_k$ for the strata with low samples.

\pagebreak

\section{Appendix}

\begin{lemma}[\cite{chung2002connected}]\label{binomialbs}
Let $X_1, ..., X_n$ be independent random variables with:

\begin{align}
    P(X_i=1)=p_i, \ P(X_i=0)=1-p_i
\end{align}

For $X=\sum_{i=1}^n a_iX_i$, we have $\Exp[X] = \sum_{i=1}^n a_ip_i$ and we have define $\nu = \sum_{i=1}^n a_i^2p_i$. Then we have:

\begin{align}
     P(x < \Exp[X] - \lambda) \leq e^{-\lambda^2/2\nu} \\
     P(X > \Exp[X] + \lambda) \leq e^{-\frac{\lambda^2}{2(\nu+a\lambda/3)}}
\end{align}
where $a = max\{a_1, ..., a_n\}$
\end{lemma}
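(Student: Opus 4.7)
The plan is to prove both tail bounds by the standard Chernoff method applied to the moment generating function of $X$, with a carefully chosen pointwise bound on $e^u-1-u$ for the upper tail so that the Bernstein-type denominator $\nu + a\lambda/3$ appears. For any $t>0$, Markov applied to $e^{tX}$ gives $P(X>\Exp[X]+\lambda) \leq e^{-t(\Exp[X]+\lambda)}\Exp[e^{tX}]$, and by independence plus $1+u \leq e^u$ termwise,
\begin{align}
\Exp[e^{tX}] = \prod_i \bigl(1 + p_i(e^{ta_i}-1)\bigr) \leq \exp\!\Bigl(\sum_i p_i(e^{ta_i}-1)\Bigr).
\end{align}

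For the upper tail I would then use the pointwise inequality $e^u - 1 - u \leq \tfrac{u^2/2}{1-u/3}$, valid for $0 \leq u < 3$, so that for $ta < 3$ (with $a=\max_i a_i$),
\begin{align}
\sum_i p_i(e^{ta_i}-1) \leq t\sum_i p_i a_i + \frac{t^2}{2(1-ta/3)}\sum_i p_i a_i^2 = t\Exp[X] + \frac{t^2 \nu}{2(1-ta/3)}.
\end{align}
Substituting back,
\begin{align}
P(X > \Exp[X]+\lambda) \leq \exp\!\Bigl(-t\lambda + \frac{t^2 \nu}{2(1-ta/3)}\Bigr),
\end{align}
and the canonical Bernstein choice $t = \lambda/(\nu + a\lambda/3)$ (which one checks satisfies $ta<3$) yields the stated $\exp(-\lambda^2/(2(\nu+a\lambda/3)))$ after algebraic simplification.

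For the lower tail I would repeat the argument with $-X$, using instead the simpler one-sided bound $e^{-u} - 1 + u \leq u^2/2$ for $u \geq 0$, which needs no correction since $e^{-u}$ does not grow. This gives $\Exp[e^{-tX}] \leq \exp(-t\Exp[X] + t^2\nu/2)$, hence $P(X < \Exp[X]-\lambda) \leq \exp(-t\lambda + t^2\nu/2)$, and optimizing at $t=\lambda/\nu$ produces $\exp(-\lambda^2/(2\nu))$. The only genuinely nontrivial step is verifying the pointwise inequality $e^u-1-u \leq \tfrac{u^2/2}{1-u/3}$ on $[0,3)$, which I would do by comparing Taylor coefficients: $u^k/k! \leq (u^2/2)(u/3)^{k-2}$ because $k! \geq 2\cdot 3^{k-2}$ for all $k\geq 2$, so termwise summation and multiplication by $(1-u/3)$ yields the claim. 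Everything else is routine bookkeeping and union-free since both inequalities follow from a single scalar optimization.
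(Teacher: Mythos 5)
Your proof is correct: the MGF bound $\Exp[e^{tX}] \le \exp\bigl(\sum_i p_i(e^{ta_i}-1)\bigr)$, the pointwise inequality $e^u-1-u \le \frac{u^2/2}{1-u/3}$ (verified termwise via $k! \ge 2\cdot 3^{k-2}$), the choice $t=\lambda/(\nu+a\lambda/3)$ for the upper tail, and the simpler $e^{-u}-1+u\le u^2/2$ with $t=\lambda/\nu$ for the lower tail all check out and yield exactly the stated exponents. The paper gives no proof of this lemma — it is reproduced verbatim from the cited reference — so there is nothing to compare against; your argument is the standard Chernoff--Bernstein derivation underlying that result (note only that it implicitly requires $a_i \ge 0$, which holds in the paper's application where $a_i = 1/N_1$).
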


\begin{lemma}[\cite{princetonchernoff}]\label{chernoffbs}
Let $X_1, ..., X_n$ be independent random variables. They do not need to be from the same distribution. Assume that $0 \leq X_i \leq 1$ for each $i$. LEt $X= X_1 + ... + X_n$. Write $\mu = \Exp[X] = \Exp[X_1] + ... + \Exp[X_n]$. Then, for any $\epsilon \geq 0$,

\begin{align}
    P(X \geq (1+e)\mu) \leq exp\left(-\frac{-\epsilon^2}{2+e}\mu\right) \\
    P(X \leq (1-e)\mu) \leq exp\left(-\frac{-\epsilon^2}{2}\mu\right)
\end{align}
\end{lemma}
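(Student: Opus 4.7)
The plan is to prove both tail bounds via the Chernoff exponential-moment method: apply Markov's inequality to $e^{tX}$ (resp.\ $e^{-tX}$), factor the moment generating function using independence, bound each single-variable MGF by a convexity argument, optimize the free parameter $t$, and then convert the sharp resulting exponent to the stated simpler form via an elementary analytic inequality.

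First I would establish the single-variable MGF bound. Since $X_i \in [0,1]$, convexity of $x \mapsto e^{tx}$ on $[0,1]$ gives the pointwise bound $e^{tX_i} \leq 1 - X_i + X_i e^t = 1 + X_i(e^t - 1)$. Taking expectations and using $1+y \leq e^y$ yields $\Exp[e^{tX_i}] \leq \exp(\Exp[X_i](e^t-1))$. Independence then gives $\Exp[e^{tX}] = \prod_i \Exp[e^{tX_i}] \leq \exp(\mu(e^t-1))$.

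For the upper tail I would apply Markov's inequality to $e^{tX}$ with $t>0$ and substitute the MGF bound:
\begin{align}
P(X \geq (1+\epsilon)\mu) \;\leq\; e^{-t(1+\epsilon)\mu}\,\Exp[e^{tX}] \;\leq\; \exp\!\bigl(\mu(e^t - 1 - t(1+\epsilon))\bigr).
\end{align}
Optimizing in $t$ via $t = \ln(1+\epsilon)$ produces the classical sharp form $\bigl(e^\epsilon/(1+\epsilon)^{1+\epsilon}\bigr)^\mu$, and the stated exponent $-\epsilon^2\mu/(2+\epsilon)$ follows from the analytic inequality $\epsilon - (1+\epsilon)\ln(1+\epsilon) \leq -\epsilon^2/(2+\epsilon)$ for $\epsilon \geq 0$, verified by noting both sides vanish at $\epsilon=0$ and comparing derivatives. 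The lower tail is handled symmetrically: apply Markov to $e^{-tX}$ with $t>0$, use $\Exp[e^{-tX_i}] \leq \exp(\Exp[X_i](e^{-t}-1))$, and optimize with $t = -\ln(1-\epsilon)$, obtaining $\bigl(e^{-\epsilon}/(1-\epsilon)^{1-\epsilon}\bigr)^\mu$. The simpler exponent $-\epsilon^2\mu/2$ then follows from $-\epsilon - (1-\epsilon)\ln(1-\epsilon) \leq -\epsilon^2/2$ for $\epsilon \in [0,1]$, again an elementary calculus check.

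The only nonroutine step is verifying the two real-analysis inequalities that convert the sharp Chernoff form into the stated simpler exponential form; both are standard calculus exercises (expand $\ln(1\pm\epsilon)$ in Taylor series and bound the tail, or differentiate once and check sign). Everything else is bookkeeping around the exponential Markov inequality. Since the lemma is quoted from \cite{princetonchernoff}, this sketch (MGF bound, Chernoff optimization, analytic simplification) is essentially the complete argument.
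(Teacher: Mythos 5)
The paper states this lemma only as an imported result from \cite{princetonchernoff} and gives no proof of its own, so there is nothing to compare against; your argument is the standard exponential-moment derivation (convexity bound $e^{tx}\leq 1+x(e^{t}-1)$ on $[0,1]$, Markov on $e^{\pm tX}$, optimization at $t=\pm\ln(1\pm\epsilon)$, then the elementary inequalities $\ln(1+\epsilon)\geq 2\epsilon/(2+\epsilon)$ and $(1-\epsilon)\ln(1-\epsilon)\geq-\epsilon+\epsilon^{2}/2$) and it is correct and complete. Note only that the lemma as printed contains typographical slips (the symbols $e$ and $\epsilon$ are conflated and the exponents carry a doubled minus sign); your proof correctly targets the intended statement $P(X\geq(1+\epsilon)\mu)\leq\exp(-\epsilon^{2}\mu/(2+\epsilon))$ and $P(X\leq(1-\epsilon)\mu)\leq\exp(-\epsilon^{2}\mu/2)$.
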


\begin{lemma}[\cite{ustatistictovariance}]\label{bdtosv}
Let $X_1, ..., X_n$ be i.i.d. For a U-statistic of order 2 in the form $U_n = \frac{1}{\binom{n}{2}} \sum_{i < j} g(X_i, X_j)$, if $g(X_i, X_j)=\frac{1}{2}(X_i - X_j)^2$,
\begin{align}
    \Exp[U_n] = \Var[X]
\end{align}
\end{lemma}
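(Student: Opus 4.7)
The plan is to reduce the expectation of the U-statistic to a single pairwise expectation by linearity, then expand the square and apply the i.i.d. hypothesis to recover the classical identity $\Exp[(X_1-X_2)^2] = 2\Var[X]$.

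First I would apply linearity of expectation to the defining sum: $\Exp[U_n] = \binom{n}{2}^{-1}\sum_{i<j} \Exp[g(X_i, X_j)]$. Because the $X_i$ are i.i.d., every pair $(X_i, X_j)$ with $i \neq j$ has the same joint distribution as $(X_1, X_2)$, so each of the $\binom{n}{2}$ summands equals $\Exp[g(X_1, X_2)] = \tfrac{1}{2}\Exp[(X_1-X_2)^2]$. The prefactor and the count then cancel, leaving $\Exp[U_n] = \tfrac{1}{2}\Exp[(X_1-X_2)^2]$.

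Next I would expand and compute. Writing $(X_1 - X_2)^2 = X_1^2 - 2 X_1 X_2 + X_2^2$ and taking expectations term by term: identical distribution gives $\Exp[X_1^2] = \Exp[X_2^2] = \Exp[X^2]$, and independence combined with identical distribution gives $\Exp[X_1 X_2] = \Exp[X_1]\Exp[X_2] = \Exp[X]^2$. Hence $\Exp[(X_1 - X_2)^2] = 2\Exp[X^2] - 2\Exp[X]^2 = 2\Var[X]$, and multiplying by $\tfrac{1}{2}$ yields $\Exp[U_n] = \Var[X]$, as required.

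There is no real obstacle here: the result is a textbook identity and follows entirely from linearity, independence, and identical distribution, all of which are built into the hypothesis. The only step worth calling out is that $\Exp[X_1 X_2] = \Exp[X_1]\Exp[X_2]$ uses independence while $\Exp[X_1]\Exp[X_2] = \Exp[X]^2$ uses identical distribution; both are bundled into the i.i.d.\ assumption, so no additional moment conditions beyond the implicit existence of $\Exp[X^2]$ are needed.
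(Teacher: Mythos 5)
Your proof is correct: linearity of expectation plus the i.i.d.\ assumption reduces $\Exp[U_n]$ to $\tfrac{1}{2}\Exp[(X_1-X_2)^2]$, and the expansion $\Exp[(X_1-X_2)^2]=2\Exp[X^2]-2\Exp[X]^2=2\Var[X]$ finishes the argument. The paper does not prove this lemma itself --- it only cites it from the literature --- and your derivation is exactly the standard one, so there is nothing to reconcile.
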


\begin{lemma}[\cite{ustatistic}]\label{bdinq} Let $X_1, ..., X_n$ be i.i.d. For a U-statistic of order 2 in the form $U_n = \frac{1}{\binom{n}{2}} \sum_{i < j} g(X_i, X_j)$ with an upper bound $g(X_i, X_j) \leq b$,
\begin{align}
    P(|U_n - \Exp[U_n]| \geq t) \leq 2\exp(-\frac{nt^2}{8b^2})
\end{align}
\end{lemma}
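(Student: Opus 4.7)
The plan is to invoke the functional delta method for the bootstrap (van der Vaart, Theorems 23.7 and 23.9), which reduces bootstrap validity for $\theta_T$ to the single analytic condition that $\theta_T$, viewed as a functional of the joint CDF $F_0$ of $X_{k,i} = (Y_{k,i}, P_{k,i})$, is Hadamard differentiable tangentially to a suitable subspace at $F_0$. Once Hadamard differentiability is established, Theorem 23.7 gives $\sqrt{n}(\theta_T(\hat F_n) - \theta_T(F_0)) \Rightarrow \dot\theta_{T,F_0}(\mathbb{B})$ for a Brownian bridge process $\mathbb{B}$, and Theorem 23.9 gives the same limit in probability for the bootstrap version $\sqrt{n}(\theta_T(\hat F_n^*) - \theta_T(\hat F_n))$, yielding the claimed bootstrap validity.

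The strategy for checking Hadamard differentiability is to decompose $\theta_T$ into elementary building blocks whose differentiability is standard, then reassemble via the product, sum and chain rules for Hadamard derivatives. First, for each stratum $k$, the marginal probability $q_k(F) := \Exp_F[\mathbf{1}\{P_k = 1\}]$ is a bounded linear functional of $F$, hence trivially Hadamard differentiable. Second, the joint moment $m_k(F) := \Exp_F[Y_k \mathbf{1}\{P_k = 1\}]$ is also a bounded linear functional, provided we work over the class of distributions with a uniform moment bound on $Y_k$ (which holds under the subgaussian assumption and the $C_k^{(\mu)}$ bound). Then I would write the conditional mean as the ratio $\Exp_F[Y_k \mid P_k = 1] = m_k(F)/q_k(F)$ and conclude that this ratio is Hadamard differentiable at $F_0$ by applying the chain rule to the map $(a,b) \mapsto a/b$, smooth away from $b = 0$, which is exactly where the assumption $\pall \geq C^{(\pall)} > 0$ is invoked so that $q_k(F_0)$ is bounded away from zero.

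With these pieces in place, the full functional is
\begin{equation*}
\theta_T(F) = \frac{1}{\sum_k q_k(F)} \cdot \sum_k q_k(F)\,\frac{m_k(F)}{q_k(F)}\,f(T_k,T),
\end{equation*}
which is a finite linear combination of products of Hadamard-differentiable maps, followed by multiplication by $1/\sum_k q_k(F)$. The inverse $b \mapsto 1/b$ is again chain-rule Hadamard differentiable on the region $b \geq C^{(\pall)}$, so putting everything together $\theta_T$ is Hadamard differentiable at $F_0$ with an explicit derivative $\dot\theta_{T,F_0}$ obtained by propagating these rules.

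The main obstacle, as already signalled above, is the denominators: both $q_k(F)$ in the conditional mean and $\sum_k q_k(F)$ in the outer normalization must be bounded uniformly away from zero along any perturbation direction considered by the functional delta method. The assumption $\pall \geq C^{(\pall)}$ handles the outer sum, and for the individual $q_k(F_0)$ either one appeals to the fact that Hadamard differentiability is a local property at $F_0$ (so one only needs $q_k(F_0) > 0$, which holds by construction for any stratum that contributes to the estimator), or one formally restricts $\theta_T$ to stratum indices with positive $p_k$ and treats vanishing strata as contributing zero. Once this bookkeeping is done, no further nontrivial calculation is needed and the two van der Vaart theorems deliver the weak convergence on both the empirical and the bootstrap side to the same limit $\dot\theta_{T,F_0}(\mathbb{B})$.
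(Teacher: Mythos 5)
Your proposal does not address the statement at hand. The statement you were asked to prove is the concentration inequality for bounded U-statistics of order 2 (\Cref{bdinq}): for $U_n = \binom{n}{2}^{-1}\sum_{i<j} g(X_i,X_j)$ with $g \leq b$, one has $P(|U_n - \Exp[U_n]| \geq t) \leq 2\exp(-nt^2/(8b^2))$. What you have written instead is a proof sketch of the bootstrap-validity theorem via the functional delta method and Hadamard differentiability of $\theta_T$ — a different result from a different section of the paper. Nothing in your argument (linear functionals of the CDF, chain/product rules for Hadamard derivatives, denominators bounded away from zero) bears on the tail behavior of $U_n$, so as a proof of the stated lemma it is vacuous.

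For the record, the paper does not prove this lemma either — it is imported by citation — but the standard argument is short and elementary, and is what you should have produced. One route is Hoeffding's decomposition: average the quantity $W(X_{\pi(1)},\dots,X_{\pi(n)}) = \frac{1}{\lfloor n/2\rfloor}\sum_{j=1}^{\lfloor n/2\rfloor} g(X_{\pi(2j-1)}, X_{\pi(2j)})$ over permutations $\pi$; each $W$ is an average of $\lfloor n/2\rfloor$ i.i.d.\ terms bounded by $b$, so Hoeffding's inequality plus convexity of $\exp$ (Jensen applied to the permutation average) gives a tail of the form $2\exp(-cnt^2/b^2)$. Alternatively, McDiarmid's bounded differences inequality applies directly: replacing a single $X_i$ changes $U_n$ by at most $(n-1)b/\binom{n}{2} = 2b/n$, yielding $P(|U_n - \Exp[U_n]| \geq t) \leq 2\exp(-nt^2/(2b^2))$, which is stronger than the stated bound. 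Either of these is the intended content; the functional delta method is not.
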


\bibliography{main}

\begin{thebibliography}{5}
\providecommand{\natexlab}[1]{#1}
\providecommand{\url}[1]{\texttt{#1}}
\expandafter\ifx\csname urlstyle\endcsname\relax
  \providecommand{\doi}[1]{doi: #1}\else
  \providecommand{\doi}{doi: \begingroup \urlstyle{rm}\Url}\fi

\bibitem[Chung and Lu(2002)]{chung2002connected}
Fan Chung and Linyuan Lu.
\newblock Connected components in random graphs with given expected degree
  sequences.
\newblock \emph{Annals of combinatorics}, 6\penalty0 (2):\penalty0 125--145,
  2002.

\bibitem[Ferguson(2005)]{ustatistictovariance}
Thomas Ferguson.
\newblock U-statistics notes for statistics 200c, spring 2005.
\newblock
  \url{http://www.stat.cmu.edu/~arinaldo/Teaching/36710/F18/Scribed_Lectures/Sep24.pdf},
  2005.
\newblock [Online; accessed 11-Feb-2021].

\bibitem[Rinaldo(2018)]{ustatistic}
Alessandro Rinaldo.
\newblock 36-710: Advanced statistical theory lecture 7: September 24.
\newblock
  \url{http://www.stat.cmu.edu/~arinaldo/Teaching/36710/F18/Scribed_Lectures/Sep24.pdf},
  2018.
\newblock [Online; accessed 11-Feb-2021].

\bibitem[Tarjan(2009)]{princetonchernoff}
Robert Tarjan.
\newblock 15-359: Probability and computing lecture 10: More chernoff bounds,
  sampling, and the chernoff + union bound method.
\newblock
  \url{http://aiweb.techfak.uni-bielefeld.de/content/bworld-robot-control-software/},
  2009.
\newblock [Online; accessed 11-Feb-2021].

\bibitem[Van~der Vaart(2000)]{van2000asymptotic}
Aad~W Van~der Vaart.
\newblock \emph{Asymptotic statistics}, volume~3.
\newblock Cambridge university press, 2000.

\end{thebibliography}

\end{document}